\documentclass[reqno,12pt]{amsart}

\usepackage{amssymb}
\usepackage{verbatim}

\hfuzz=2.5pt
\vfuzz=2.5pt

\textwidth  = 6.15in
\textheight = 8.00in
\hoffset    =-0.45in
\voffset    =-0.15in

\theoremstyle{plain}
\newtheorem{lemma}{Lemma}[section]
\newtheorem{theorem}[lemma]{Theorem}
\newtheorem{proposition}[lemma]{Proposition}
\newtheorem{corollary}[lemma]{Corollary}

\theoremstyle{example}
\newtheorem*{example}{Example}

\numberwithin{equation}{section}

\newcommand{\est}{\varnothing}

\newcommand{\seq}{\subseteq}
\newcommand{\snqq}{\subsetneqq}
\newcommand{\stm}{\setminus}

\newcommand{\longc}{,\dotsc,}
\newcommand{\longe}{=\dotsb=}
\newcommand{\longp}{+\dotsb+}
\newcommand{\longd}{\mid\dots\mid}
\newcommand{\longle}{\le\dotsb\le}
\newcommand{\longop}{\oplus\dotsb\oplus}

\newcommand{\mmod}[1]{\!\!\pmod{#1}}

\newcommand{\<}{\langle}
\renewcommand{\>}{\rangle}
\newcommand{\lpr}{\left(}
\newcommand{\rpr}{\right)}
\newcommand{\lfr}{\left\{}
\newcommand{\rfr}{\right\}}
\newcommand{\lfl}{\left\lfloor}
\newcommand{\rfl}{\right\rfloor}

\DeclareMathOperator{\diam}{diam}
\DeclareMathOperator{\ord}{ord}
\DeclareMathOperator{\rk}{rk}

\newcommand{\alp}{\alpha}
\newcommand{\bet}{\beta}
\newcommand{\eps}{\varepsilon}
\newcommand{\lam}{\lambda}
\renewcommand{\phi}{\varphi}

\newcommand{\oA}{\overline A}
\newcommand{\oG}{\overline G}

\newcommand{\oa}{\bar a}

\newcommand{\N}{{\mathbb N}}
\newcommand{\Z}{{\mathbb Z}}

\renewcommand{\sp}[1][\rho]{{\mathbf s}_{#1}^+}
\newcommand{\tp}[1][\rho]{{\mathbf t}_{#1}^+}

\begin{document}
\baselineskip=16pt

\title[Generating groups by addition]
  {Generating abelian groups by addition only}

\subjclass[2000]{Primary: 20K01; secondary: 20F05}

\author{Benjamin Klopsch}
\address{Mathematisches Institut \\
  Heinrich-Heine-Universit\"at \\ 40225 D\"usseldorf \\ Germany}
\email{klopsch@math.uni-duesseldorf.de}
\author{Vsevolod F. Lev}
\address{Department of Mathematics \\ The University of Haifa at Oranim \\
  Tivon 36006 \\ Israel}
\email{seva@math.haifa.ac.il}

\thanks{An essential part of our work was carried out while the first
  named author paid a one-week visit to the University of Haifa in May
  2006. He is grateful for the hospitality experienced during his
  visit, and he acknowledges the financial support provided by the
  Deutsche Forschungsgemeinschaft.}

\begin{abstract}
We define the \emph{positive diameter} of a finite group $G$ with respect to
a generating set $A\seq G$ to be the smallest non-negative integer $n$ such
that every element of $G$ can be written as a product of at most $n$ elements
of $A$. This invariant, which we denote by $\diam_A^+(G)$, can be interpreted
as the diameter of the Cayley digraph induced by $A$ on $G$.

In this paper we study the positive diameters of a finite \emph{abelian}
group $G$ with respect to its various generating sets $A$. More specifically,
we determine the maximum possible value of $\diam_A^+(G)$ and classify all
generating sets for which this maximum value is attained. Also, we determine
the maximum possible cardinality of $A$ subject to the condition that
$\diam_A^+(G)$ is ``not too small''.

Conceptually, the problems studied are closely related to our earlier work
\cite{b:kl} and the results obtained shed a new light on the subject. Our
original motivation came from connections with caps, sum-free sets, and
quasi-perfect codes.
\end{abstract}

\maketitle


\section{Introduction}\label{s:intro}

Let $G$ be a finite group and let $A$ be a subset of $G$. Recall, that
the \emph{subgroup generated by $A$ in $G$} is the smallest subgroup of
$G$, containing $A$; in other words, it is the intersection of all
subgroups containing $A$:
  $$ \textstyle \<A\> := \bigcap\,\{H \le G \colon A \seq H\}. $$
If $\<A\>=G$, then $A$ is called a \emph{generating set for $G$}. Loosely
speaking, the aim of this paper is to determine whether and how quickly $A$
generates $G$, using only information about the cardinality of $A$.

Since eventually we restrict to the situation where $G$ is abelian, we use
additive notation throughout. Let $A_0:=A\cup\{0\}$, and for every
non-negative integer $\rho$ define
$$
  \<A\>_\rho^+ := \rho A_0 = \{a_1\longp a_\rho\colon a_i\in A_0\} \seq G,
$$
the set of all $g\in G$ representable as a sum of \emph{at most $\rho$}
elements of $A$. Then $\<A_0\>_\rho^+=\<A\>_\rho^+$ for all
$\rho\in\N_0:=\N\cup\{0\}$, and, evidently, these sets form an ascending
chain $\{0\}=\<A\>_0^+\seq\<A\>_1^+\seq\dots$ which stabilizes after finitely
many steps in the group $\<A\>=\bigcup_{\rho\ge 0}\<A\>_\rho^+$.

For every $g\in G$ we define the \emph{\textup{(}positive\textup{)}
  length of $g$ with respect to $A$} as
$$
  l_A^+(g) := \min \{ \rho\in\N_0 \colon g\in \<A\>_\rho^+ \}.
  $$
  Here we agree that $\min\est=\infty$, so that for $g\in G$ we
  have $g\in\<A\>$ if and only if $l_A^+(g)<\infty$. Observe that
  $l_A^+(g)$ is the minimum number of elements of $A$ required to
  represent $g$ as their sum. Thus, for every $\rho \in \N_0$ we have
$$
\<A\>_\rho^+ = \{ g\in G \colon l_A^+(g)\le \rho \}.
$$
We define the \emph{\textup{(}positive\textup{)} diameter of $G$
  with respect to $A$} as
\begin{align*}
  \diam_A^+(G)
    &:= \min \{ \rho\in\N_0 \colon \<A\>_\rho^+ = G \} \\
    &\;= \max \{ l_A^+(g) \colon g\in G\}.
\end{align*}
Note that the notion of a diameter allows a simple graph-theoretic
interpretation. Indeed, for any $x,y \in G$, the distance from $x$ to $y$ in
the Cayley digraph induced by $A$ on $G$ is given by $l_A^+(-x+y)$, and thus
$\diam_A^+(G)$ is the diameter of this digraph. In this sense, $\diam_A^+(G)$
gives a quantitative measure for the ``number of steps'' needed to generate
$G$ from $A$. In particular, $A$ is a generating set for $G$ if and only if
$\diam_A^+(G) < \infty$.

The length of a given element and the diameter of $G$ depend upon the
choice of $A\seq G$. In contrast, the \emph{absolute
  \textup{(}positive\textup{)} diameter}
$$
\diam^+(G) := \max \{ \diam_A^+(G) \colon \<A\>=G \}
$$
is an invariant of $G$ itself; every generating set produces the group in at
most that many steps.

The two fundamental problems addressed in this paper are:
\begin{itemize}
\item[\textbf{P1.}] What is the value of $\diam^+(G)$ and what are the
  generating sets $A \seq G$ such that $\diam_A^+(G) = \diam^+(G)$?
\item[\textbf{P2.}] How large can $\diam_A^+(G)$ be, given that $|A|$ is
  large?
\end{itemize}
Under the assumption that $G$ is \emph{abelian}, we solve the first
problem completely and we provide a partial answer to the second one. Our
main results are collected in Section~\ref{s:summary}; the proofs are
presented in Sections~\ref{s:Kneser} to~\ref{s:bounds_IV}.


We remark that different notions of length and diameter result from
considering all ``algebraic sums'' $\pm a_1 \pm \dotsb \pm a_n,\ a_i \in A$,
at each step of generating the group $G$ from its subset $A$. This approach,
leading to a significantly different theory, was pursued in~\cite{b:kl}. In
the important ``classical'' case, where $G$ is of exponent $2$, the two
approaches are identical; connections with caps, sum-free sets, and
quasi-perfect codes for this special case are discussed in
\cite[Section~3]{b:kl}.

In fact, the problem of finding generating subsets $A$ of a given finite
group $G$ such that both $|A|$ and $\diam^+_A(G)$ are small has attracted
much attention. Traditionally, most authors restrict their study to symmetric
generating sets, i.e.\ generating sets which are closed under taking
inverses. Asymptotic estimates of the diameters of non-abelian simple groups
with respect to various types of (symmetric) generating sets can be found in
the survey \cite{b:bhkls}, which also lists related work, e.g.\ on the
diameters of permutation groups. Investigations related to the diameters of
abelian (more generally nilpotent) groups and driven by the search for
efficient communication networks can be found, for instance, in \cite{b:ab}
and \cite{b:gp}.

As far as we know, this paper and its precursor~\cite{b:kl} form the first
steps towards a systematic study of the ``worst'' case, determining in
particular the largest possible value of $\diam^+_A(G)$ for all, respectively
all symmetric, generating subsets $A \seq G$. Moreover, the present paper is
one of the very few, concentrating on asymmetric generating sets --- which,
we believe, can be of interest in the theory of network design.


\section{Summary of Results}\label{s:summary}

Throughout this section let $G$ denote a finite abelian group.

\subsection{The Absolute Diameter}

Up to isomorphism, every finite abelian group is characterized by its type.
Suppose that $G$ has type $(m_1,\ldots,m_r)$; that is, $G \cong
\Z_{m_1}\longop\Z_{m_r}$ where $1\neq m_1\longd m_r$. The number $r$ is
called the \emph{rank of $G$} and denoted $\rk(G)$.  A \emph{standard
generating set for $G$} is an ordered subset $A=\{a_1\longc a_r\}\seq G$ such
that $G=\<a_1\>\longop\<a_r\>$ and $\ord(a_i)=m_i$ for all $i\in [1,r]$.

If $A$ is a standard generating set for $G$, then plainly
$\diam_A^+(G)=\sum_{i=1}^r (m_i-1)$. Perhaps not surprisingly, this is as
large as possible: specifically, we have
\begin{theorem}\label{t:thm21}
  A finite abelian group $G$ of type $(m_1,\ldots,m_r)$ has diameter
  $$
  \diam^+(G) = \sum_{i=1}^r (m_i-1).
  $$
\end{theorem}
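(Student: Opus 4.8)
The plan is to prove two inequalities. The lower bound $\diam^+(G) \ge \sum_{i=1}^r (m_i - 1)$ is immediate from the observation already made in the text: a standard generating set $A = \{a_1, \dots, a_r\}$ satisfies $\diam_A^+(G) = \sum_{i=1}^r (m_i - 1)$, since the element $\sum_{i=1}^r (m_i - 1) a_i$ requires exactly that many summands and nothing in $G$ requires more. So the whole content of the theorem is the upper bound: for \emph{every} generating set $A$ of $G$, one has $\diam_A^+(G) \le \sum_{i=1}^r (m_i - 1)$.

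Let me think about the upper bound. Fix an arbitrary generating set $A$ and let $d := \diam_A^+(G)$. The natural object to study is the ascending chain $\{0\} = \<A\>_0^+ \seq \<A\>_1^+ \seq \dots \seq \<A\>_d^+ = G$. Since $A$ generates $G$ and the chain stabilizes only at $G$, every one of the first $d$ inclusions must be \emph{strict}: if $\<A\>_\rho^+ = \<A\>_{\rho+1}^+$ for some $\rho < d$, then adding elements of $A_0$ no longer enlarges the set, so the chain would have already stabilized at $\<A\>_\rho^+ \ne G$, contradicting $\<A\> = G$. Thus we have a strictly increasing chain of $d+1$ subsets of $G$, which on its own only gives the crude bound $d \le |G| - 1$. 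To get the sharp bound $\sum (m_i - 1)$, I would try to control how much the \emph{cardinality} can grow at each step, and here I expect a Kneser-type addition theorem (the section title ``Kneser'' strongly suggests this is the tool) to be decisive.

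The key step is therefore a cardinality estimate for the successive sumsets. The idea is to write $\<A\>_{\rho+1}^+ = \<A\>_\rho^+ + A_0$ and apply Kneser's theorem to the sumset $\<A\>_\rho^+ + A_0$. Kneser's theorem says that if $H$ is the stabilizer of $\<A\>_\rho^+ + A_0$, then $|\<A\>_\rho^+ + A_0| \ge |\<A\>_\rho^+ + H| + |A_0 + H| - |H|$. The heuristic is that the growth from $\<A\>_\rho^+$ to $\<A\>_{\rho+1}^+$ is governed by this periodicity structure, and by tracking the chain of stabilizer subgroups one should be able to distribute the total growth $|G| - 1$ across the steps so that the number of strict steps is bounded by $\sum_{i=1}^r (m_i - 1)$. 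A clean way to organize this is probably by induction on $|G|$ (or on $r$): pass to a quotient $G/H$ by a suitably chosen cyclic subgroup $H \cong \Z_{m_r}$ (or $\Z_{m_1}$), apply the inductive hypothesis to the image of $A$ in $G/H$, and then account for the additional $m_r - 1$ (resp.\ $m_1 - 1$) steps needed to ``climb'' within the cosets of $H$, using Kneser's theorem to show that once a coset is entered it fills up without wasting steps.

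The main obstacle I anticipate is the bookkeeping in the inductive step: one must choose the subgroup $H$ compatibly with the type so that both the quotient $G/H$ and the subgroup $H$ have the ``right'' types summing correctly, and one must show that the lengths in $G$ relative to $A$ are controlled by lengths in $G/H$ plus lengths within a coset — the subtlety being that a minimal-length representation in $G$ need not split neatly as (steps to reach the correct $H$-coset) plus (steps within that coset), because the same summand from $A$ can contribute to both. Reconciling these two contributions without double-counting, and proving that the worst case is exactly additive, is where Kneser's theorem on stabilizers and the careful handling of periodic progressions will be essential; I expect this to be the technical heart of the argument.
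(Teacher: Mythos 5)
Your lower bound is fine, and you have correctly located the content of the theorem in the upper bound; but for the upper bound what you have written is a plan, not a proof, and the plan does not go through. The step you defer (``reconciling these two contributions without double-counting \dots\ I expect this to be the technical heart'') is precisely the missing proof, and both tools you propose for it are demonstrably too weak. First, per-step cardinality control via Kneser's Theorem cannot yield the sharp bound. Take $G=\Z_3^3$ (type $(3,3,3)$, so the claimed bound is $6$) and $A_0=\{0,e_1,e_2,e_3\}$ with $\{e_1,e_2,e_3\}$ a basis. Then $|\<A\>_\rho^+|=1,4,10,17,23,26,27$ for $\rho=0,1,\dotsc,6$; none of $4,10,17,23,26$ is divisible by $3$, so every proper set in the chain is aperiodic and Kneser supplies no stabilizer information whatsoever: it guarantees only $|\<A\>_{\rho+1}^+|\ge|\<A\>_\rho^+|+|A_0|-1$, i.e.\ growth $3$ per step, which certifies only $\diam_A^+(G)\le\lfl 25/3\rfl+1=9$, not $6$. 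Any accounting that distributes the total growth $|G|-1$ over the strict steps using Kneser-type minimum-growth estimates is blocked by this example: the true growth profile $(3,6,7,6,3,1)$ is multinomial in shape and invisible to stabilizer considerations. Second, your inductive scheme (quotient by a cyclic $H\cong\Z_{m_r}$, then ``climb within the cosets of $H$'') founders on exactly the lifting problem you name: after lifting a representation of the image of $g$, one must write an element of $H$ as a short sum of elements of $A$, but $A\cap H$ may be empty, and nothing places short sums of elements of $A$ inside $H$. You acknowledge this obstacle and offer nothing to overcome it.

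The paper's actual proof uses neither Kneser's Theorem (which it invokes only for Proposition~\ref{p:prop28} and Theorems~\ref{t:thm29} and~\ref{t:thm210}) nor the sumset chain. It is an induction on $|G|$ built on a multiplicative device: pick a prime $p\mid m_1$ and take as kernel the fully invariant subgroup $p\ast G=\{pg\colon g\in G\}$, rather than any cyclic subgroup or complement. Two observations make this work: (a) the quotient $\oG:=G/(p\ast G)$ is elementary abelian of rank $r$, and in a vector space every generating set contains a basis, so $\diam_{\bar A}^+(\oG)\le r(p-1)$; and (b) the set $p\ast A$ generates $p\ast G$, and each of its elements, being of the form $pa=a\longp a$, costs exactly $p$ elements of $A$. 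Hence
$$
\diam_A^+(G)\ \le\ \diam_{\bar A}^+(\oG)+p\cdot\diam_{p\ast A}^+(p\ast G)
\ \le\ r(p-1)+p\sum_{i=1}^r(m_i/p-1)\ =\ \sum_{i=1}^r(m_i-1).
$$
Point (b) is what dissolves your double-counting worry: one never needs elements of $A$ lying in the kernel, because the kernel is generated by $p$-fold repetitions of elements of $A$, at a cost of $p$ steps each, which the arithmetic of the type absorbs exactly. This device, together with the prime-exponent special case, is the idea missing from your proposal.
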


Conversely, if $A$ is a generating set for $G$ with
$\diam_A^+(G)=\diam^+(G)$, then $A$ is ``nearly standard''.

\begin{theorem}\label{t:thm22}
Let $G$ be a finite abelian group of rank $r$. Then  for every
subset $A \seq G$ the following assertions are equivalent:
\begin{itemize}
\item[(i)]
  $\diam_A^+(G) = \diam^+(G)$;
\item[(ii)] There exists a standard generating set $\{e_1 \longc e_r\}$
  for $G$ with the following property: one can write
  $A_0=\{0,a_1,\ldots,a_r\}$ so that for each $i\in[1,r]$, either
  $a_i=e_i$, or $a_i=e_i+a_{\sigma(i)}$ with $\sigma(i)\in[i+1,r]$.
\end{itemize}
Moreover, if $G$ has type $(m_1\longc m_r)$ and if $A$ is as in {\rm
  (ii)}, then every element $g\in G$ can be uniquely written as
$g=\sum_{i=1}^r \lam_i a_i$ with $\lam_i\in[0,m_i-1]$ for all
$i\in[1,r]$, and then $l_A^+(g)=\sum_{i=1}^r \lam_i$.
\end{theorem}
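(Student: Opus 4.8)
The plan is to treat the two implications separately, and to fold the ``Moreover'' assertion into the proof of (ii)$\Rightarrow$(i), since it is precisely the computation that makes that implication work. The implication (ii)$\Rightarrow$(i) together with the length formula is constructive and I expect it to be routine; the real difficulty is the classification (i)$\Rightarrow$(ii), i.e.\ showing that maximal diameter forces the ``nearly standard'' shape, and for this I would run an induction on $|G|$ driven by the sharp bound of Theorem~\ref{t:thm21} and its Kneser-theoretic proof.

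For (ii)$\Rightarrow$(i) and the ``Moreover'' clause, fix the standard generating set $\{e_1\longc e_r\}$ and set $V_i:=\langle e_i\longc e_r\rangle$, so that $G=V_1\supsetneq V_2\supsetneq\dots\supsetneq V_{r+1}=0$ with $V_i/V_{i+1}\cong\Z_{m_i}$. Since $\sigma(i)\ge i+1$ gives $a_i-e_i\in V_{i+1}$, one has $a_i\in V_i$ and $a_i\equiv e_i\pmod{V_{i+1}}$. A downward induction then shows $V_i=\langle a_i\longc a_r\rangle$, in particular $\langle A\rangle=G$. Reducing a relation $\sum\lam_ia_i=\sum\lam_i'a_i$ modulo $V_2$ yields $\lam_1\equiv\lam_1'\pmod{m_1}$, hence $\lam_1=\lam_1'$; subtracting and inducting on $r$ proves uniqueness of the digit representation. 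As there are exactly $\prod m_i=|G|$ admissible tuples $(\lam_i)$, the map $(\lam_i)\mapsto\sum\lam_ia_i$ is then a bijection onto $G$.

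It remains to prove $l_A^+(g)=w(g)$, where $w(g):=\sum_i\lam_i$ for the unique digits of $g$. The bound $l_A^+(g)\le w(g)$ is immediate from $g=\sum\lam_ia_i$. For the reverse bound I would prove, by induction on $r$, the single key inequality
\[
  w(h+a_j)\le w(h)+1\qquad(h\in G,\ j\in[1,r]);
\]
telescoping it along any representation $g=a_{i_1}\longp a_{i_n}$ gives $w(g)\le n$, whence $w(g)\le l_A^+(g)$. Writing $w(h)=\lam_1+w_2(h-\lam_1a_1)$ with $w_2$ the digit sum in $V_2=\langle a_2\longc a_r\rangle$, the cases $j\ge2$ and ``$j=1$ without carry'' follow directly from the inductive hypothesis applied in $V_2$. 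The only delicate case is $j=1$ with $\lam_1=m_1-1$, where a carry occurs: here the carried term $m_1a_1$ is either $0$ (when $a_1=e_1$) or equals $m_1a_{\sigma(1)}$ (when $a_1=e_1+a_{\sigma(1)}$, since $\ord(e_1)=m_1$), and in the latter case I rewrite it as $m_1$ copies of the $V_2$-generator $a_{\sigma(1)}$ and apply the inductive inequality $m_1$ times, costing exactly $m_1=(m_1-1)+1$. Granting the formula, $\diam_A^+(G)=\max_gw(g)=\sum_i(m_i-1)$, which equals $\diam^+(G)$ by Theorem~\ref{t:thm21}; this is (i).

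For (i)$\Rightarrow$(ii) I would argue by induction on $|G|$. The first step is to show $|A|=r$: a generating set with more than $\rk(G)$ elements fills $G$ strictly faster, so maximal diameter forces a minimal generating set, and I expect this to drop out of the same Kneser estimate used for the upper bound. The heart of the matter is then to peel off one invariant factor. Using the chain $S_\rho=\rho A_0$, which strictly increases until it reaches $G$, the proof of Theorem~\ref{t:thm21} controls the growth $|S_\rho|-|S_{\rho-1}|$ via Kneser's addition theorem, and $\diam_A^+(G)=\sum(m_i-1)$ forces equality at every step. The equality case of Kneser's theorem says the relevant sumsets are unions of cosets of their stabilizers, and tracking these stabilizers along the chain should produce a subgroup $V\seq G$ of index $m_1$ with $G/V\cong\Z_{m_1}$, together with a distinguished element of $A$ mapping to a generator of $G/V$ while the remaining elements lie in $V$. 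Applying the inductive hypothesis to $A\cap V$ inside $V$ (of type $(m_2\longc m_r)$, again extremal) yields a standard generating set $\{e_2\longc e_r\}$ and the triangular form there, which I would lift to $G$ by adjoining a suitable $e_1$. The main obstacle is precisely this equality-case bookkeeping: converting the Kneser periodicities of the whole chain $S_0\seq S_1\seq\dots$ into a single clean cyclic-factor splitting, and verifying that the lifted element $a_1$ has exactly the form $e_1$ or $e_1+a_{\sigma(1)}$ demanded by (ii).
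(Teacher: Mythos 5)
Your proof of (ii)$\Rightarrow$(i) together with the ``Moreover'' clause is essentially sound: the filtration $V_i=\langle e_i\longc e_r\rangle$, digit uniqueness via reduction modulo $V_2$, and the carry inequality $w(h+a_j)\le w(h)+1$ (using $m_1a_1=m_1a_{\sigma(1)}$, valid since $\ord(e_1)=m_1$) give a correct, if differently organized, version of what the paper does by normalizing a minimal-length representation (replacing $\mu_j a_j$ by $(\mu_j-m_j)a_j+m_ja_{\sigma(j)}$ whenever $\mu_j\ge m_j$). The genuine gap is in (i)$\Rightarrow$(ii), where your central mechanism rests on a false premise. The paper's proof of Theorem~\ref{t:thm21} does not use Kneser's theorem at all --- Kneser enters only in Proposition~\ref{p:prop28} and Theorems~\ref{t:thm29} and~\ref{t:thm210} --- it is an elementary induction through the quotient $G/(p\ast G)$ and the subgroup $p\ast G$. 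More importantly, extremality of the diameter does \emph{not} force equality at every step in any Kneser-type growth bound for the chain $S_\rho=\rho A_0$: for the standard generating set of $\Z_2^r$ the increments are $|S_\rho|-|S_{\rho-1}|=\binom{r}{\rho}$, which exceeds the aperiodic Kneser lower bound $|A_0|-1=r$ at almost every step, and in general the total budget $|G|-1$ spread over $\sum_{i=1}^r(m_i-1)$ steps leaves enormous slack. So the equality case of Kneser's theorem is simply unavailable, and ``tracking stabilizers along the chain'' cannot be expected to produce the index-$m_1$ subgroup $V$ with $A\setminus\{a_1\}\seq V$ that your induction requires. Your preliminary step $|A|=r$ is likewise unestablished; in the paper the corresponding fact ($A_0=\{0,a_1\longc a_r\}$) is derived only midway, as a consequence of the identity \eqref{e:equ41}, not as an opening move.

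What actually closes the classification is quite different, and none of it appears in your sketch. The paper: (a) shows that $A$ contains an element $a$ of \emph{maximal} order $m_r$, by passing to $p\ast A\seq p\ast G$ and verifying that otherwise the diameter count falls strictly below $\diam^+(G)$; (b) splits $G=H\oplus\langle a\rangle$ --- the reason for peeling off the largest invariant factor as a direct summand, rather than the smallest as a quotient as you propose, is precisely that cyclic subgroups of maximal order are complemented --- then projects $A$ onto $H$ and applies the induction hypothesis there; (c) deduces $l_A^+\bigl(\sum_{i=1}^r\lambda_ia_i\bigr)=\sum_{i=1}^r\lambda_i$ for all digit vectors, which pins down $A$; and (d) applies the induction hypothesis a \emph{second} time, to $K=\langle a_2\longc a_r\rangle$, and carries out the delicate final analysis along the chain $K_j$ showing $m_1a_1=m_1a_j$ for a suitable $j$, so that $e_1:=a_1-a_j$ completes the standard generating set. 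Step (d) is exactly the obstacle you flag but do not resolve, and steps (a)--(c) are the machinery your Kneser-based plan would have to replace; as written, your (i)$\Rightarrow$(ii) is a program whose key tool does not apply to this problem.
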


The assertion (ii) above describes a general procedure which allows one to
construct any set $A=\{a_1\longc a_r \}$ with $\diam_A^+(G)=\diam^+(G)$ from
a standard generating set $\{e_1\longc e_r\}$, starting from $a_r=e_r$ and
working the way down to $a_1$.

\subsection{The Maximum Size of Large Diameter Sets}
For abelian groups, Theorems~\ref{t:thm21} and \ref{t:thm22} provide a
complete solution of Problem~P1. Now we address Problem~P2; that is, we want
to determine to what extent the size of a generating set guarantees fast
generation.

Our aim is to find, for $\rho\in\N$, an upper bound for the sizes of
generating sets $A$ for $G$ with $\diam_A^+(G)\ge\rho$. Agreeing that
$\max\est=0$, we define
\begin{align*}
  \sp(G) &:=   \max \{ |A| \colon A\seq G \text{ such that } \rho
                                          \le \diam_A^+(G) < \infty \} \\
          &\;= \max \{ |A| \colon A\seq G \text{ such that }
                                       \<A\>_{\rho-1}^+ \ne \<A\>=G \}.
\end{align*}
The significance of this invariant stems from the observation that every
generating set $A$ for $G$ of size, larger than $\sp(G)$, surely
generates $G$ in less than $\rho$ steps. It is immediate that
$$
\sp[1](G) \geq \sp[2](G) \geq \dotsb \quad \text{and} \quad \sp(G) = 0
\text{ for $\rho > \diam^+(G)$.}
$$

In order to better understand $\sp(G)$ we introduce a related and perhaps
more fundamental invariant, $\tp(G)$; this requires a short preparation. We
say that a subset $A\seq G$ is \emph{$\rho$-maximal} if it is maximal (under
inclusion) subject to $\diam_A^+(G)\ge\rho$; that is, subject to
$\<A\>_{\rho-1}^+\neq G$. Plainly, we have
  $$ \sp(G) = \max \{ |A| \colon \text{$A$ is a $\rho$-maximal
                                           generating set for $G$}\}. $$
Recall that the \emph{period} of a subset $S\seq G$ is the subgroup
$\pi(S):=\{g\in G\colon S+g=S\}\le G$, and $S$ is \emph{periodic} or
\emph{aperiodic} according to whether $\pi(S)\neq\{0\}$ or $\pi(S)=\{0\}$.

It is not difficult to see that $\rho$-maximal generating sets for a group
$G$ can be constructed by a ``lifting process'' from aperiodic $\rho$-maximal
generating sets for the quotient groups of $G$.%
\footnote{Suppose that $H \lneqq G$, and let $\phi\colon G\rightarrow G/H$
  denote the canonical homomorphism. If $\oA$ is a generating set for $G/H$
  with $0\in\oA$, then the full pre-image $A:=\phi^{-1}(\oA)$ is a generating
  set for $G$ with $\diam_A^+(G)=\diam_{\bar A}^+(G/H)$ and $H\le\pi(A)$.
  Conversely, suppose that $A$ is a generating set for $G$ with $0\in A$ and
  $H \le \pi(A)$. Then the image $\oA=\phi(A)$ is a generating set for $G/H$
  and $\diam_{\bar A}^+(G/H)=\diam_A^+(G)$.  Furthermore, if $A$ is
  $\rho$-maximal then so is $\oA$, and if $H=\pi(A)$ then $\oA$ is
  aperiodic.}
This shows that every $\rho$-maximal generating set is induced by an
aperiodic one, and a natural point of view, therefore, is to consider
aperiodic $\rho$-maximal generating sets as ``primitive'' and concentrate on
their properties first. Accordingly, we let
$$
\tp(G) := \max \{ |A| \colon \text{$A$ is an aperiodic
  $\rho$-maximal generating set for $G$} \}.
$$

A slightly simpler description of $\tp(G)$ can be provided. Suppose that
$A\seq G$ is aperiodic and $\rho$-maximal, but $\<A\>\neq G$. Then, being
$\rho$-maximal, $A$ coincides with a maximal subgroup of $G$. Since $A$ is
aperiodic, we have $A=\{0\}$, and $G$ is cyclic of prime order. Thus any
non-empty subset $B \subseteq G$, other than $\{0\}$, satisfies
$\<B\>^+_{\rho-1}=G$. This implies $\diam^+(G)<\rho$, and consequently
$\tp(G)=0$. Therefore,
$$
\tp(G) =
\begin{cases}
  \max \{ |A| \colon \text{$A\seq G$ is aperiodic and $\rho$-maximal}
  \} & \text{if $\rho \in [1,\diam^+(G)]$,} \\
  0 & \text{otherwise.}
\end{cases}
$$

The close connection between $\tp(G)$ and $\sp(G)$ is described by
\begin{lemma}\label{l:lem23}
Let $G$ be a finite abelian group and let $\rho\ge 2$. Then
  $$ \sp(G) = \max \{ |H|\cdot\tp(G/H) \colon H\lneqq G \}. $$
\end{lemma}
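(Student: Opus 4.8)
The plan is to prove the identity
$$
\sp(G) = \max\{|H|\cdot\tp(G/H)\colon H\lneqq G\}
$$
by establishing two inequalities, each built on the lifting footnote already recorded in the excerpt. Throughout I would use freely that, for a proper subgroup $H\lneqq G$ with canonical map $\phi\colon G\to G/H$, a generating set $\oA$ for $G/H$ containing $0$ lifts to $A=\phi^{-1}(\oA)$, which is a generating set for $G$ with $\diam_A^+(G)=\diam_{\oA}^+(G/H)$, with $H\le\pi(A)$, and with $|A|=|H|\cdot|\oA|$; and conversely an $A$ with $0\in A$ and $H\le\pi(A)$ pushes down to $\oA=\phi(A)$ with the same diameter and $\rho$-maximality preserved.

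First I would prove the inequality $\sp(G)\ge|H|\cdot\tp(G/H)$ for every proper subgroup $H$. Fix $H\lneqq G$ achieving the maximum on the right, and let $\oA\seq G/H$ be an aperiodic $\rho$-maximal generating set with $|\oA|=\tp(G/H)$; replacing $\oA$ by $\oA\cup\{0\}$ changes neither its size (as $0\in\oA$ already, since a $\rho$-maximal set with $\rho\ge 2$ contains $0$ by maximality) nor its relevant properties. Lifting gives $A=\phi^{-1}(\oA)$, a generating set for $G$ with $\diam_A^+(G)=\diam_{\oA}^+(G/H)\ge\rho$ and hence $\rho\le\diam_A^+(G)<\infty$; thus $A$ is admissible in the definition of $\sp(G)$, and since $|A|=|H|\cdot|\oA|=|H|\cdot\tp(G/H)$ we get $\sp(G)\ge|H|\cdot\tp(G/H)$. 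Taking the maximum over $H$ yields one direction.

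Next I would prove $\sp(G)\le\max\{|H|\cdot\tp(G/H)\}$. Take a $\rho$-maximal generating set $A$ for $G$ with $|A|=\sp(G)$; by maximality $0\in A$. Set $H:=\pi(A)$. By the final clause of the footnote, $\oA:=\phi(A)$ is then an aperiodic $\rho$-maximal generating set for $G/H$. The key point is that $H$ is a \emph{proper} subgroup: if we had $H=G$ then $A=G$ (a periodic set equal to a coset union of its period must be all of $G$ when its period is $G$), forcing $\diam_A^+(G)\le 1<\rho$, contradicting $\rho\ge 2$. With $H\lneqq G$ established, $\oA$ is a legitimate competitor for $\tp(G/H)$, so $|\oA|\le\tp(G/H)$; and since $A$ is a union of $H$-cosets we have $|A|=|H|\cdot|\oA|\le|H|\cdot\tp(G/H)\le\max_{H'\lneqq G}|H'|\cdot\tp(G/H')$. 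Combining both inequalities gives the claimed equality.

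The main obstacle, and the step warranting the most care, is verifying that the $H=\pi(A)$ arising from the optimal $A$ is genuinely proper and that $\oA=\phi(A)$ really is $\rho$-maximal and aperiodic in $G/H$, so that it qualifies in the definition of $\tp(G/H)$; this is exactly where the hypothesis $\rho\ge 2$ is used, ruling out the degenerate possibility $\pi(A)=G$, and where one must invoke the footnote's claim that passing to the quotient by the full period both preserves $\rho$-maximality and produces an aperiodic set. The size bookkeeping $|A|=|H|\cdot|\phi(A)|$ is routine once $A$ is known to be a union of $\pi(A)$-cosets, and the reduction to sets containing $0$ is harmless throughout since adding $0$ affects neither cardinality nor any of the invariants in play.
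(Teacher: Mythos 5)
Your proof is correct and takes essentially the same route as the paper: both directions rest on the lifting correspondence from the footnote, pushing an optimal $\rho$-maximal generating set $A$ down modulo $H=\pi(A)$ (noting $\pi(A)\lneqq G$ and that the image is aperiodic and $\rho$-maximal) for the upper bound, and lifting an optimal aperiodic $\rho$-maximal set along the full preimage $\phi^{-1}(\oA)$ for the lower bound. The only cosmetic difference is that the paper first disposes of the degenerate case $\rho>\diam^+(G)$, where $\sp(G)$ and every $\tp(G/H)$ vanish, while your argument covers it implicitly.
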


\subsubsection{Explicit Formulae for $\tp(G)$ and $\sp(G)$}

Suppose that $\rho\in[1,\diam^+(G)]$. We determine $\tp(G)$ and
$\sp(G)$ explicitly in the following cases:
\begin{itemize}
\item[(i)] $G$ is an arbitrary finite abelian group and
  $\rho\in\{1,2,3,4,\diam^+(G)\}$, with some exceptions regarding
  $\tp[4](G)$;
\item[(ii)] $G$ is a finite cyclic group and $\rho\in[1,\diam^+(G)]$ is
  arbitrary.
\end{itemize}

The case of $G$ arbitrary and $\rho\in\{1,2,3,\diam^+(G)\}$ is settled by
\begin{theorem}\label{t:thm24}
Let $G$ be a finite abelian group. We have
\begin{itemize}
\item[(i)]   $\tp[1](G)=0$ and $\sp[1](G)=|G|$, provided that
             $\diam^+(G)\ge 1$;
\item[(ii)]  $\tp[2](G)=\sp[2](G)=|G|-1$, provided that $\diam^+(G)\ge 2$;
\item[(iii)] $\tp[3](G)=\sp[3](G)=\lfl |G|/2 \rfl$, provided that
             $\diam^+(G)\ge 3$;
\item[(iv)]  if $\rho=\diam^+(G) \ge 2$, then $\tp(G)=\sp(G)=\rk(G)+1$.
\end{itemize}
\end{theorem}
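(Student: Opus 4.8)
The plan is to treat the four assertions by squeezing each common value between a lower bound for $\tp(G)$ (an explicit construction) and an upper bound for $\sp(G)$ (a counting argument), using throughout that $\tp(G)\le\sp(G)$ --- this is the $H=\{0\}$ term in Lemma~\ref{l:lem23} when $\rho\ge 2$, and is immediate for $\rho=1$. Part (i) is direct: the set $A=G$ generates $G$ with $\diam_A^+(G)=1$ and is as large as possible, so $\sp[1](G)=|G|$; and since $\diam_A^+(G)\ge 1$ merely says $\<A\>_0^+=\{0\}\ne G$, a condition holding for every $A$, the only $1$-maximal set is $A=G$ itself, which is periodic, whence no aperiodic $1$-maximal set exists and $\tp[1](G)=0$.

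For (ii) and (iii) the engine is the elementary pigeonhole fact that $|B|>|G|/2$ forces $B+B=G$ (for each $g$ the sets $B$ and $g-B$ must meet). With $\rho=2$, applying this to $B=A_0$ shows $A_0\ne G$, so $|A|\le|G|-1$ and $\sp[2](G)\le|G|-1$; conversely $A=G\stm\{x\}$ with $x\ne 0$ is aperiodic (its only period would satisfy $x+g=x$), generates $G$, and has $\diam_A^+(G)=2$, so $\tp[2](G)\ge|G|-1$. With $\rho=3$ the requirement $\<A\>_2^+=A_0+A_0\ne G$ together with the same bound gives $|A|\le\lfl|G|/2\rfl$, hence $\sp[3](G)\le\lfl|G|/2\rfl$.

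The main obstacle is the matching construction for (iii): an aperiodic $3$-maximal generating set of size exactly $\lfl|G|/2\rfl$, which I would produce according to the parity of $|G|$. If $|G|$ is even, I fix an index-$2$ subgroup $H$, an element $t\notin H$ and some $s\in t+H$, and set $A=\{0\}\cup((t+H)\stm\{s\})$; then $|A|=|G|/2$ and a short computation gives $A+A=G\stm\{s\}$. If $|G|$ is odd, I fix $w\ne 0$ and the involution $\tau(x)=w-x$ (with unique fixed point $w/2$), and let $A$ consist of $0$ together with one element from each pair $\{x,w-x\}$, omitting $w/2$; then $|A|=(|G|-1)/2$ and $A\cap(w-A)=\est$, so $w\notin A+A$. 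In either case $3$-maximality is automatic, since $0\in A$ and adjoining any further element pushes $|A_0|$ above $|G|/2$, collapsing the sumset to $G$ by the pigeonhole bound; the delicate points are aperiodicity and generation, the latter forcing a careful choice of representatives in the odd case. This yields $\tp[3](G)\ge\lfl|G|/2\rfl$ and closes the squeeze. A handful of small groups, such as $\Z_4$, I would check by hand.

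Finally (iv) is essentially a reading of Theorem~\ref{t:thm22}. For $\rho=\diam^+(G)$ the condition $\diam_A^+(G)\ge\rho$ forces equality $\diam_A^+(G)=\diam^+(G)$, so $A$ is one of the sets classified there and $A_0=\{0,a_1\longc a_r\}$; thus $|A|\le|A_0|\le r+1=\rk(G)+1$, bounding $\sp(G)$. For the lower bound I take a standard generating set $\{e_1\longc e_r\}$ and put $A=\{0,e_1\longc e_r\}$: it realises $\diam^+(G)$, is aperiodic (a nontrivial period would be a subgroup contained in $A$, impossible as $2e_i\notin A$), and is $\rho$-maximal because any proper extension has $|A_0|>r+1$ and so cannot have maximal diameter, again by Theorem~\ref{t:thm22}. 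Hence $\tp(G)\ge r+1$, and the two invariants coincide at $\rk(G)+1$.
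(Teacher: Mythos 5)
Your strategy coincides with the paper's: the same pigeonhole upper bounds, the same constructions (the set $G\stm\{g\}$ in (ii); the punctured coset $\{0\}\cup((g+H)\stm\{g\})$ and the involution-pairing set in (iii)), and the same appeal to Theorem~\ref{t:thm22} in (iv). However, two of your justifications do not stand as written. In (iii), the points you defer --- aperiodicity and generation --- are exactly what separates your construction from the conclusion $\tp[3](G)\ge\lfl |G|/2\rfl$ (the invariant counts only aperiodic \emph{generating} sets), and your remark that generation ``forces a careful choice of representatives in the odd case'' is a misdiagnosis: no choice is needed, because both verifications are counting arguments valid for \emph{every} selection. For odd $|G|$: every proper subgroup has order at most $|G|/3<(|G|-1)/2=|A|$, whence $\<A\>=G$; and $\pi(A)$ is a subgroup whose order divides both $|G|$ and $|A|$ (a set is a union of $\pi(A)$-cosets), while $\gcd\big((|G|-1)/2,|G|\big)=1$, whence $\pi(A)=\{0\}$. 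For even $|G|$, both points fall out of your own computation $A+A=G\stm\{s\}$: the subgroup $\<A\>$ contains $A+A$, hence has order exceeding $|G|/2$ and equals $G$; and $\pi(A)\le\pi(A+A)=\pi(G\stm\{s\})=\{0\}$. These one-line arguments (they are the paper's) are what your outline is missing.

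In (iv), your reason for aperiodicity of $A=\{0,e_1\longc e_r\}$ --- ``impossible as $2e_i\notin A$'' --- is false whenever $\ord(e_i)=2$, since then $2e_i=0\in A$; this happens precisely for groups you cannot afford to skip, such as $G=\Z_2^r$ with $r\ge 2$. Your premise is correct ($0\in A$ forces $\pi(A)\seq A$, and $\pi(A)$ is a subgroup), but the contradiction must be drawn differently: if $e_i\in\pi(A)$ and $r\ge 2$, then $e_i+e_j\in A+\pi(A)=A$ for $j\neq i$, which is impossible since $e_i+e_j\notin\{0,e_1\longc e_r\}$ by directness of $G=\<e_1\>\longop\<e_r\>$; if $r=1$, then $\diam^+(G)\ge 2$ forces $m_1\ge 3$ and your argument does apply. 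With these two local repairs your proof is complete and is essentially identical to the paper's.
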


Recall that by Theorem~\ref{t:thm21}, for every $m\in\N$ the diameter of the
cyclic group $\Z_m$ of order $m$ is $\diam^+(\Z_m)=m-1$. The precise values
of $\tp(\Z_m)$ and $\sp(\Z_m)$ are established by
\begin{theorem}\label{t:thm25}
  Let $m\in\N$ and $\rho\in [2,m-1]$. Then
\begin{align*}
  \tp(\Z_m) &= \lfl \frac{m-2}{\rho-1} \rfl +1, \\
  \sp(\Z_m) &= \max \lfr \frac md \lpr \lfl \frac{d-2}{\rho-1} \rfl
                            + 1 \rpr \colon d\mid m,\ d\ge \rho+1 \rfr.
\end{align*}
\end{theorem}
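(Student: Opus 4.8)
The plan is to derive the formula for $\sp(\Z_m)$ directly from the one for $\tp(\Z_m)$ and to concentrate the real work on the latter. Since every proper subgroup $H \lneqq \Z_m$ satisfies $|H| = m/d$ for some $d \mid m$ with $d \ge 2$, and $\Z_m/H \cong \Z_d$, Lemma~\ref{l:lem23} gives $\sp(\Z_m) = \max\{(m/d)\,\tp(\Z_d) : d \mid m,\ d \ge 2\}$. By Theorem~\ref{t:thm21} we have $\diam^+(\Z_d) = d-1$, so $\tp(\Z_d) = 0$ unless $\rho \le d-1$; discarding the vanishing terms leaves exactly the divisors $d \ge \rho+1$, and substituting $\tp(\Z_d) = \lfl (d-2)/(\rho-1)\rfl + 1$ reproduces the stated expression for $\sp(\Z_m)$. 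Thus it suffices to prove $\tp(\Z_m) = \lfl (m-2)/(\rho-1)\rfl + 1$; write $q := \lfl (m-2)/(\rho-1)\rfl$, so that $q(\rho-1) \le m-2 < (q+1)(\rho-1)$.

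For the lower bound I exhibit the interval $A := \{0,1\longc q\}$. Since $q \ge 1$ we have $1 \in A$, so $A$ generates $\Z_m$, and since $q \le m-2$ the set is a proper aperiodic subset. Here $\<A\>_k^+ = \{0,1\longc kq\}$ for every $k$, so $\<A\>_{\rho-1}^+ = \{0\longc q(\rho-1)\} \ne \Z_m$ and $\diam_A^+(\Z_m) \ge \rho$. It remains to check $\rho$-maximality: for each $x \in [q+1,m-1]$ one must show that $\bigcup_{k=0}^{\rho-1}\lpr kx + [0,(\rho-1-k)q]\rpr$ exhausts $\Z_m$. The only residues not already covered by the $k=0$ term lie in the top block $[q(\rho-1)+1,m-1]$, which contains at most $\rho-1$ elements; a direct interval-chasing argument, using the successive multiples $x,2x\longc(\rho-1)x$ together with the blocks $[0,(\rho-1-k)q]$, fills this block for every admissible $x$ (for $x=m-1$, for instance, the multiples $-1\longc -(\rho-1)$ already suffice). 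This yields an aperiodic $\rho$-maximal generating set of size $q+1$, hence $\tp(\Z_m) \ge q+1$.

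For the upper bound let $A$ be any aperiodic $\rho$-maximal generating set; I bound $|A_0|$ and use $|A| \le |A_0|$. Put $B_k := \<A\>_k^+ = kA_0$ and $H := \pi(B_{\rho-1})$. Because $B_{\rho-1} \ne \Z_m$ while any $i$ with $B_i = B_{i+1}$ would already give $B_i = \Z_m$, the chain $\{0\} = B_0 \snqq B_1 \snqq \dots \snqq B_{\rho-1} \snqq \Z_m$ is strictly increasing, and the periods satisfy $\pi(B_1) \seq \pi(B_2) \seq \dots \seq H$. The principal case is $H = \{0\}$: then iterated Kneser (Section~\ref{s:Kneser}) gives $|B_{\rho-1}| \ge (\rho-1)|A_0| - (\rho-2)$, while $B_{\rho-1} \ne \Z_m$ forces $|B_{\rho-1}| \le m-1$; combining, $(\rho-1)|A_0| \le m+\rho-3$, whence $|A| \le |A_0| \le \lfl (m-2)/(\rho-1)\rfl + 1 = q+1$.

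The main obstacle is the periodic case $H \ne \{0\}$, which can occur even though $A$ itself is aperiodic. Here I pass to the quotient $\Z_d := \Z_m/H$, where $d = m/|H|$: the image $\oA$ generates $\Z_d$, and $(\rho-1)\oA = \phi(B_{\rho-1})$ is a proper subset of $\Z_d$ that is aperiodic there (since $\pi(B_{\rho-1}) = H$ exactly), so the aperiodic bound just proved yields $|\oA| \le \lfl (d-2)/(\rho-1)\rfl + 1$. The difficulty is that $|A_0| = \sum_{\oa \in \oA}|A_0 \cap \phi^{-1}(\oa)|$ may exceed $|\oA|$, so the fibre sizes must be controlled. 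The plan is to run the Kneser estimate through the full period chain $\pi(B_i)$ and to show, using the $\rho$-maximality and aperiodicity of $A$, that a nontrivial period cannot appear before the final step $i = \rho-1$ without already forcing $|A_0| < q+1$ (equivalently, that $\sum_{i=2}^{\rho-2}|\pi(B_i)| \le \rho-3$); this pins down $|A| \le q+1$ in the periodic case as well. Matching the two bounds gives $\tp(\Z_m) = q+1$, and the formula for $\sp(\Z_m)$ then follows from the reduction above.
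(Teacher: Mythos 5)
Your reduction of $\sp(\Z_m)$ to $\tp(\Z_m)$ via Lemma~\ref{l:lem23} is correct and is exactly the paper's route, and your ``principal case'' of the upper bound (iterated Kneser plus $|\<A\>_{\rho-1}^+|\le m-1$) is also the paper's argument, i.e.\ Proposition~\ref{p:prop28}. But both places where you defer the real work are genuine gaps, and both are closed by one observation you missed (the paper's Lemma~\ref{lem51}): if $A$ is $\rho$-maximal and $H:=\pi(\<A\>_{\rho-1}^+)$, then
\[
  \<A+H\>_{\rho-1}^+ \;=\; \<A\>_{\rho-1}^+ + H \;=\; \<A\>_{\rho-1}^+ \snqq \Z_m,
\]
so \emph{maximality} of $A$ forces $A+H=A$, i.e.\ $H\le\pi(A)$. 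Hence for an aperiodic $\rho$-maximal set the ``periodic case'' $H\neq\{0\}$ --- which you call the main obstacle and for which you only offer a plan (passing to $\Z_d=\Z_m/H$ and controlling fibre sizes, with the unproven and rather unclear claim $\sum_{i=2}^{\rho-2}|\pi(B_i)|\le\rho-3$) --- simply cannot occur. As written, your upper bound is incomplete precisely at the point that constitutes the whole difficulty, and the quotient route you sketch runs straight into the fibre-size problem you yourself acknowledge, whereas the maximality trick eliminates the case in one line.

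The lower bound has a similar gap. You try to prove that the interval $B=\{0,1,\dotsc,q\}$ is itself $\rho$-maximal, which amounts to the covering claim $\bigcup_{k=0}^{\rho-1}\bigl(kx+[0,(\rho-1-k)q]\bigr)=\Z_m$ for \emph{every} $x\in[q+1,m-1]$; you assert this by ``a direct interval-chasing argument'' but never give it, and it genuinely requires a case analysis (the ranges $x$ slightly above $q$, $x$ in the covered middle, and $x$ in the uncovered top block behave differently, especially when $q$ is small). The paper sidesteps this entirely: take \emph{any} $\rho$-maximal $A\supseteq B$ (one exists since $\<B\>_{\rho-1}^+\neq\Z_m$), and show $A$ is aperiodic. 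Indeed, with $H:=\pi(A)$ the same maximality identity gives $\<B\>_{\rho-1}^++H\seq\<A\>_{\rho-1}^+\snqq\Z_m$, while $\<B\>_{\rho-1}^+$ is a block of $(\rho-1)q+1>m/2$ consecutive residues, so $\<B\>_{\rho-1}^++H=\Z_m$ for any $H\neq\{0\}$; hence $H=\{0\}$ and $\tp(\Z_m)\ge|A|\ge q+1$, without ever proving that $B$ itself is maximal (that follows only a posteriori, from the upper bound). So your skeleton is right and your intended constructions are the correct ones, but completing the proof requires the maximality argument of Lemma~\ref{lem51}; without it, both the periodic case of the upper bound and the interval-chasing of the lower bound remain unproven.
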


\begin{corollary}
Let $p$ be a prime and let $\rho\in[2,p-1]$. Then
$$ \tp(\Z_p) = \sp(\Z_p) = \lfl \frac{p-2}{\rho-1} \rfl + 1. $$
\end{corollary}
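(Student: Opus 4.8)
The plan is to derive everything directly from Theorem~\ref{t:thm25}, specialized to the prime modulus $m=p$. Indeed, the first displayed formula of Theorem~\ref{t:thm25} already gives
$$
  \tp(\Z_p) = \lfl \frac{p-2}{\rho-1} \rfl + 1
$$
with no further work, since the hypothesis $\rho\in[2,p-1]$ is precisely the range $\rho\in[2,m-1]$ required there.

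For the second equality I would analyse the maximum over divisors appearing in the formula for $\sp(\Z_m)$. The divisors of $p$ are only $1$ and $p$, so the admissible set $\{d\mid p\colon d\ge\rho+1\}$ contains at most these two elements. Here both halves of the hypothesis enter: since $\rho\ge 2$ we have $\rho+1\ge 3>1$, which excludes $d=1$; and since $\rho\le p-1$ we have $\rho+1\le p$, so $d=p$ does satisfy $d\ge\rho+1$. Hence the maximum is taken over the single admissible value $d=p$.

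Evaluating the corresponding term, with $d=p$ the factor $m/d=p/p$ equals $1$, and the bracket reduces to the expression already computed for $\tp(\Z_p)$. This yields
$$
  \sp(\Z_p) = \frac pp \lpr \lfl \frac{p-2}{\rho-1} \rfl + 1 \rpr
            = \lfl \frac{p-2}{\rho-1} \rfl + 1 = \tp(\Z_p),
$$
which is the claimed identity.

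I would not expect any genuine obstacle here: the entire substance is contained in Theorem~\ref{t:thm25}, and the corollary is merely the observation that for a prime modulus the divisor maximum collapses to one term. The only point demanding care is to confirm that the constraint $d\ge\rho+1$ selects exactly $d=p$, which is precisely where both endpoints of the range $\rho\in[2,p-1]$ are used.
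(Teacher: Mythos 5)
Your proposal is correct and is exactly the intended derivation: the paper states this corollary without proof as an immediate consequence of Theorem~\ref{t:thm25}, and your specialization to $m=p$ (with the observation that the divisor maximum collapses to the single admissible term $d=p$, using both endpoints of $\rho\in[2,p-1]$) is precisely that argument.
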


The case of $G$ arbitrary and $\rho=4$ remains partially open and
deserves further comments. A remarkable result of Davydov and Tombak
\cite{b:dt} motivated by coding theory and finite geometry
applications (see \cite{b:chll,b:fp} and also \cite{b:l} for some
extensions) shows that
\begin{equation}\label{e:equ21}
  \tp[4](\Z_2^r) = 2^{r-2}+1 \quad\text{and}\quad
                        \sp[4](\Z_2^r) = 5\cdot2^{r-4};\qquad r\ge 4.
\end{equation}
We notice that the first formula is considerably subtler than the second.
Indeed, applying Lemma~\ref{l:lem23} one can easily compute $\sp[4](\Z_2^r)$
from the values $\tp[4](\Z_2^s)$, $s\le r$.

We obtain similarly explicit formulae for abelian groups, which are
not of exponent $2$, except that we have been unable to determine the
exact value of $\tp[4](G)$ for some particularly tough groups $G$. To
state our results we introduce the new invariant
$$
  \eta(G) := \max \{ |H| \colon H \le G \text{ with } \exp(G/H) \neq
                        2 \text{ and } |G/H| \equiv 2\mmod 3 \} \in \N_0.
$$

\begin{theorem} \label{t:thm27}
Let $G$ be a finite abelian group with $\diam^+(G) \ge 4$.
\begin{itemize}
\item[(i)] We have
  $$
  \tp[4](G) =
  \begin{cases}
    \frac13\,|G|     & \text{if $3$ divides $|G|$,} \\
    \frac13\,(|G|-1) & \text{if every divisor of $|G|$ is congruent
      to $1$ modulo $3$.}
  \end{cases}
  $$
\item[(ii)] If $G$ is not of exponent $2$, then
  $$
  \sp[4](G) =
  \begin{cases}
    \frac{1}{3}\, (|G| + \eta(G))  & \text{if $\eta(G)\neq 0$,} \\
    \frac{1}{3}\, |G| & \text{if $\eta(G) = 0$ and $3$ divides $|G|$,} \\
    \frac{1}{3}\, (|G|-1) & \text{if $\eta(G)=0$ and $3$ does not
      divide $|G|$.}
  \end{cases}
  $$
\end{itemize}
\end{theorem}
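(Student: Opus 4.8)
The plan is to establish part~(i) first and then read off part~(ii) from it via Lemma~\ref{l:lem23}. So let $A$ be an aperiodic $4$-maximal generating set for $G$. Since $A$ is $4$-maximal it must contain $0$: otherwise $A\cup\{0\}$ would strictly contain $A$ while leaving the threefold sum $3A_0$, and hence the condition $3A_0\neq G$, unchanged. Thus $A_0=A$, and $4$-maximality says precisely that $3A\neq G$ whereas $3(A\cup\{g\})=G$ for every $g\in G\stm A$. Put $H:=\pi(3A)$. Kneser's theorem (Section~\ref{s:Kneser}), applied to $A+A+A$, yields $|3A|\ge 3\,|A+H|-2\,|H|$, while $3A$ being a proper $H$-periodic subset gives $|3A|\le|G|-|H|$. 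Combining these,
$$
3\,|A+H|\;\le\;|G|+|H|, \qquad\text{whence}\qquad |A|\;\le\;\tfrac13\bigl(|G|+|H|\bigr).
$$
In particular, when $H=\{0\}$ this reads $|A|\le\tfrac13(|G|+1)$, which already gives the desired upper bound $\lfl|G|/3\rfl$ if $3\mid|G|$ and the bound $(|G|-1)/3$ if $|G|\equiv1\pmod3$.

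For the lower bounds I would exhibit explicit extremal sets. In the cyclic case $G=\Z_{3k}$ the interval $A=\{0,1\longc k-1\}$ is aperiodic, generates $G$, and has $3A=\{0\longc 3k-3\}\neq G$; a short check shows that adjoining any missing element produces all of $G$, so $A$ is $4$-maximal and $\tp[4](\Z_{3k})\ge k$. A genuinely higher-rank example is $A=\{0,e_1,e_2\}$ in $\Z_3^2$, for which $3A=\Z_3^2\stm\{2e_1+2e_2\}$. For a general $G$ with $3\mid|G|$ one builds a set of size $|G|/3$ of the same ``almost covering'' type, occupying about one third of $G$ and so arranged that $3A$ omits only a small region opposite to $0$, taking care that the set is genuinely aperiodic and generating rather than a pull-back from a proper quotient; when every divisor of $|G|$ is $\equiv1\pmod3$ a slightly thinned variant attains $(|G|-1)/3$.

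The matching upper bound is the heart of the matter, and the case $H\neq\{0\}$ is the main obstacle, since the basic inequality then only gives the too-large value $\tfrac13(|G|+|H|)$. Here I would descend to the quotient $\oG:=G/H$ under the canonical map $\phi$: by the choice of $H$ the image $3\phi(A)=\phi(3A)$ is aperiodic and still proper in $\oG$, so Kneser gives the clean bound $|\phi(A)|\le\tfrac13(|\oG|+1)$. The difficulty is that $A$ may meet its $H$-cosets very unevenly, so that $|A|$ is much larger than $|\phi(A)|$; one must show that the aperiodicity of $A$ (which already forbids all fibres from being full) together with the covering condition coming from $4$-maximality forces enough deficiency in the fibres to recover the sharp constant. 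Concretely, with $D:=G\stm 3A$ (a non-empty $H$-periodic set) the maximality condition reads $D\seq(2A+g)\cup(A+2g)\cup\{3g\}$ for every $g\in G\stm A$, and analysing this coset-by-coset over $H$ both bounds the fibre sizes and, in the extremal configurations, pins down the structure of $A$. The genuinely hard sub-case is $|G|\equiv2\pmod3$ with $\exp(G)\neq2$, where $\tp[4](G)$ is left undetermined; even there the argument still delivers the universal upper bound $\tp[4](G)\le\lfl(|G|+1)/3\rfl$, which is all that part~(ii) requires, while the exponent-$2$ groups fall short of it and are governed by~\eqref{e:equ21}.

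For part~(ii) I would substitute the exact values of $\tp[4]$ on quotients into Lemma~\ref{l:lem23}, which gives $\sp[4](G)=\max\{\,|H|\cdot\tp[4](G/H)\colon H\lneqq G\,\}$. Writing $Q=G/H$ and sorting by the residue of $|Q|$ modulo $3$: a quotient with $3\mid|Q|$ contributes $|G|/3$; a quotient all of whose divisors are $\equiv1\pmod3$ contributes $(|G|-|H|)/3<|G|/3$; a quotient with $|Q|\equiv2\pmod3$ and $\exp(Q)\neq2$ contributes $|H|\cdot\tfrac13(|Q|+1)=\tfrac13(|G|+|H|)$, which is largest precisely for the subgroup realising $\eta(G)$; and the exponent-$2$ quotients, controlled by~\eqref{e:equ21}, are readily seen never to dominate. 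Noting that $H=\{0\}$ itself contributes to $\eta(G)$ exactly when $|G|\equiv2\pmod3$ and $\exp(G)\neq2$, comparison of these contributions yields the three cases of the stated formula for $\sp[4](G)$. The one point demanding extra care is to confirm that the ``tough'' quotients attaining the maximum do carry aperiodic $4$-maximal sets of the full size $\tfrac13(|Q|+1)$ — supplied by the cyclic values of Theorem~\ref{t:thm25} and by simplex-type constructions — so that the maximum in Lemma~\ref{l:lem23} is actually attained and not merely bounded above.
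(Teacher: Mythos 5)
Your overall architecture (a Kneser-type upper bound, explicit extremal sets, then Lemma~\ref{l:lem23} plus a case analysis on $|G/H|$ modulo $3$ for part~(ii)) is the same as the paper's, but at three places what you defer as remaining work is exactly where the substance lies. First, the upper bound in part~(i): you single out the case $H:=\pi(3A)\neq\{0\}$ as ``the heart of the matter'' and propose an uncompleted fibre-by-fibre analysis over $G/H$. That case is in fact vacuous, for a reason your proposal never finds: since $3(A+H)=3A+H=3A\neq G$, the set $A+H$ again has diameter at least $4$, so $4$-maximality of $A$ forces $A+H=A$, whence $H\le\pi(A)=\{0\}$. This is Lemma~\ref{lem51} of the paper, and it is what makes Proposition~\ref{p:prop28}, i.e.\ the clean bound $\tp[4](G)\le\lfl(|G|+1)/3\rfl$, go through; as written, your upper bound rests on an unproven claim about ``enough deficiency in the fibres''. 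Second, your lower bounds for non-cyclic $G$ are only described impressionistically: ``a set of the same almost covering type'' and ``a slightly thinned variant'' are not constructions. The paper supplies genuine ones: for $3\mid|G|$, take $A=\{0\}\cup(g+H)\stm\{g\}$ with $H$ the kernel of a surjection $G\to\Z_3$ and $g+H$ a generator of $G/H$ (Lemma~\ref{l:lem61}); and when every divisor of $|G|$ is $\equiv 1\mmod{3}$, an induction on rank with $G=G_1\oplus G_2$, $G_1\cong\Z_m$, and $A=(A_1+G_2)\cup(\{(m-1)/3\}+A_2)$ --- verifying that this set is $4$-maximal and aperiodic is the real content of that case, not a routine check.

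Third, in part~(ii) the point you explicitly postpone --- that the subgroup realizing $\eta(G)$ has a quotient which actually attains $\tp[4](Q)=\frac13(|Q|+1)$ --- is not an afterthought, and ``simplex-type constructions'' cannot supply it: for a general $Q$ with $|Q|\equiv 2\mmod{3}$ and $\exp(Q)\neq 2$ this value is precisely what is unknown (it is the paper's open problem; e.g.\ for $Q\cong\Z_5^2$ the expected value is $7$, strictly below $\lfl(|Q|+1)/3\rfl=8$). What saves the argument is the maximality of $|H|$ in the definition of $\eta(G)$: it forces $|G/H|$ to be an odd prime $p\equiv 2\mmod{3}$, or $2q$ with $q\equiv 1\mmod{3}$ prime, or $8$, so that $G/H$ is cyclic or isomorphic to $\Z_2\oplus\Z_4$, and for exactly these groups the value $\frac13(|Q|+1)$ is attained, by Theorem~\ref{t:thm25} and Theorem~\ref{t:thm24}(iv) respectively. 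Your sketch also omits the small but necessary argument in the case $\eta(G)=0$ and $3\nmid|G|$: one must show that every divisor of $|G|$ is then $\equiv 1\mmod{3}$ --- in particular that $G$ is not a $2$-group, using $\diam^+(G)\ge 4$ and $\exp(G)\neq 2$ to produce a $\Z_8$ or $\Z_4\oplus\Z_2$ quotient contradicting $\eta(G)=0$ --- so that part~(i) applies and gives the matching lower bound $\sp[4](G)\ge\tp[4](G)=\frac13(|G|-1)$.
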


It is an interesting open problem to determine $\tp[4](G)$ for groups $G$,
not covered by Theorem~\ref{t:thm27} (i) and not of exponent $2$. The first
case one may wish to consider is $G \cong \Z_5^r$, $r \in \N$. Indeed, we
have made some progress towards proving that $\tp[4](\Z_5^r)= (3\cdot
5^{r-1}-1)/2$ for $r\ge 2$. It is our intention to give a comprehensive
treatment in a separate paper.

\subsubsection{General Estimates for $\tp(G)$ and $\sp(G)$}

Suppose that $\rho\ge 5$. We expect that finding general explicit
formulae for $\tp(G)$ and $\sp(G)$ for finite abelian groups $G$ with
$\diam^+(G)\ge 5$ is very difficult, if at all feasible.  Indeed, it
is likely that for increasing values of $\rho$, the dependence of
$\tp(G)$ and $\sp(G)$ on the algebraic structure of $G$ becomes overly
complicated. Nevertheless, some general estimates in terms of $\rho$
and $|G|$ can be given; to some extent, these estimates compensate for
the lack of explicit formulae as those we were able to provide for
$\rho\le 4$.

\begin{proposition} \label{p:prop28}
Let $G$ be a finite abelian group, and suppose that $\rho\ge 2$. Then
  $$ \tp(G) \le \lfl \frac{|G|-2}{\rho-1} \rfl + 1. $$
\end{proposition}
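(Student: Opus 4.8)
The plan is to fix an aperiodic, $\rho$-maximal generating set $A$ realizing $\tp(G)$ and to bound $|A|$ from above by analysing the chain of sumsets $\<A\>_k^+$. First I would record two normalisations. Since $A$ is $\rho$-maximal while $\<A\>_{\rho-1}^+$ does not depend on whether $0\in A$, maximality forces $0\in A$; hence $A_0=A$ and $\<A\>_1^+=A$. Moreover, because $A$ generates $G$ while $\<A\>_{\rho-1}^+\neq G$, no two consecutive sets in the chain can coincide (otherwise it would stabilise early and already equal $G$), so
$$
\{0\}=\<A\>_0^+\snqq\<A\>_1^+\snqq\dotsb\snqq\<A\>_{\rho-1}^+\snqq G .
$$
A short computation shows that the asserted bound is equivalent to the single inequality $|G|\ge(\rho-1)(|A|-1)+2$, and this is what I would aim to prove.

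The main tool is Kneser's theorem in its multi-summand form applied to $\<A\>_{\rho-1}^+=(\rho-1)A_0$. Writing $H:=\pi(\<A\>_{\rho-1}^+)$ for the period, it yields (recall $A_0=A$)
$$
|\<A\>_{\rho-1}^+|\ \ge\ (\rho-1)\,|A+H|-(\rho-2)\,|H| .
$$
Since $\<A\>_{\rho-1}^+$ is $H$-periodic and proper, it omits at least one coset of $H$, so $|G|-|\<A\>_{\rho-1}^+|\ge|H|$. Combining the two inequalities reduces the target to the ``deficiency estimate''
$$
|A+H|-|A|\ \ge\ \frac{\rho-3}{\rho-1}\,(|H|-1).
$$
In the aperiodic case $H=\{0\}$ the right-hand side vanishes and the estimate is trivial, so this case is settled outright: one obtains $|G|\ge(\rho-1)|A|-(\rho-3)$, exactly as required.

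The remaining, periodic case $H\neq\{0\}$ is where the real work lies, and I expect it to be the main obstacle. Here I would first exploit $\rho$-maximality to pin down the relation between $A$ and $H$: for $g\in H\stm\{0\}$ one has $jg+\<A\>_{\rho-1-j}^+\seq\<A\>_{\rho-1}^+$ for every $j$, whence $\<A\cup\{g\}\>_{\rho-1}^+\seq\<A\>_{\rho-1}^+\neq G$; maximality then forces $g\in A$, i.e.\ $H\seq A$. The difficulty is that $H\seq A$ makes $A$ comparatively large on the coset $H$ and does not by itself supply enough deficiency: aperiodicity of $A$ guarantees only that some coset of $H$ is not full, i.e.\ $|A+H|-|A|\ge1$, whereas the estimate above demands a deficiency growing with $|H|$. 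This loss cannot be avoided cheaply, since for $G=\Z_2^r$ and $\rho=4$ the bound is attained (Davydov--Tombak), forcing the deficiency estimate to hold essentially with equality.

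To extract the missing deficiency I would argue by induction on $|G|$, passing to the quotient by a subgroup $K\le H$ of prime order. The projection $\oA$ of $A$ to $\oG:=G/K$ is again a generating set with $\<\oA\>_{\rho-1}^+\neq\oG$, and using the maximality covering condition one checks that $\oA$ is in fact $\rho$-maximal in $\oG$; the inductive hypothesis then controls $|\oA|$. The crux is to convert this control of $|\oA|$ into the required bound on $|A|$ --- equivalently, to show that passing to $\oG$ costs close to a full $K$-coset of deficiency rather than a single element. I would establish this by combining $K\seq A$ with the covering condition applied to elements lying in those $K$-cosets that meet but do not fill $A$, thereby forcing enough of them to be absent from $A$. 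Completing this last step, and in particular ruling out the configurations in which $A$ is nearly a union of $K$-cosets, is the delicate part of the argument.
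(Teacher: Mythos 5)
Your frame --- Kneser's Theorem applied to $(\rho-1)A_0$ together with the properness estimate $|G|\ge|\<A\>_{\rho-1}^+|+|H|$, where $H:=\pi(\<A\>_{\rho-1}^+)$ --- is exactly the paper's, and your treatment of the case $H=\{0\}$ is complete and correct. The gap is the ``periodic case'' $H\neq\{0\}$, which you leave unfinished: the induction on $|G|$ you sketch (quotienting by a prime-order $K\le H$ and trying to force a near-full coset of deficiency) is never carried out, and it aims at a problem that does not exist. That case is vacuous, and your own maximality argument proves it with one more line. You applied $\rho$-maximality to $g\in H$ to conclude $H\seq A$; apply it instead to $a+g$ for arbitrary $a\in A$ and $g\in H$. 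For every $j\in[0,\rho-1]$,
$$ j(a+g)+\<A\>_{\rho-1-j}^+ \;\seq\; jg+\<A\>_{\rho-1}^+ \;=\; \<A\>_{\rho-1}^+ , $$
so $\<A\cup\{a+g\}\>_{\rho-1}^+\seq\<A\>_{\rho-1}^+\neq G$, and $\rho$-maximality forces $a+g\in A$. Hence $A+H=A$, i.e.\ $H\le\pi(A)$, contradicting the aperiodicity of $A$. This is precisely the content of Lemma~\ref{lem51} in the paper (for a $\rho$-maximal set, $\pi(A)=\pi(\<A\>_\tau^+)$ for all $\tau\in[1,\rho-1]$, proved via $\<A+H\>_{\rho-1}^+=\<A\>_{\rho-1}^++H=\<A\>_{\rho-1}^+$); with it, your ``aperiodic case'' is the entire proof --- no induction, no quotients, no structure theory.

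Two further remarks. First, note that your deficiency target $|A+H|-|A|\ge\frac{\rho-3}{\rho-1}\,(|H|-1)$ has non-positive right-hand side when $\rho\in\{2,3\}$, so your argument was already complete in that range; the genuine issue arises only for $\rho\ge 4$, and there it is resolved by the observation above rather than by any counting. Second, your appeal to Davydov--Tombak as evidence that the periodic case must be handled ``essentially with equality'' is misdirected: in the extremal sets for $\tp[4](\Z_2^r)$ both $A$ and $\<A\>_3^+$ are aperiodic (as Lemma~\ref{lem51} forces), so that example imposes no constraint on configurations with $H\neq\{0\}$ --- if anything, it is a hint that such configurations cannot occur.
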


The next theorem determines the maximum possible cardinality of a
generating set $A$ with $\diam_A^+(G)\ge\rho$ and describes the structure
of all sets $A$ of this cardinality.
\begin{theorem} \label{t:thm29}
Let $G$ be a finite abelian group, and suppose that $\rho\ge 4$. Then
$\sp(G)\le\frac2{\rho+1}\,|G|$. Moreover, equality holds if and only if
$G$ has a subgroup $H$ such that $G/H$ is cyclic of order $\rho+1$;
indeed, the following assertions are equivalent:
\begin{itemize}
\item[(i)] $A$ is a generating set for $G$ satisfying $\diam_A^+(G)\ge\rho$
  and $|A|=\frac2{\rho+1}\,|G|$;
\item[(ii)] $A=H\cup(g+H)$, where $H$ is a subgroup of $G$ such that $G/H$ is
  cyclic of order $\rho+1$, and $g$ is an element of $G$ such that $g+H$ is a
  generator of $G/H$.
\end{itemize}
\end{theorem}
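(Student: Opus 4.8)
The plan is to derive the bound $\sp(G)\le\frac2{\rho+1}|G|$ from Lemma~\ref{l:lem23} and Proposition~\ref{p:prop28}, and then to read off the equality case by recording exactly which of the intervening inequalities can be tight.

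First I would prove the bound. By Lemma~\ref{l:lem23} we have $\sp(G)=\max\{|H|\cdot\tp(G/H)\colon H\lneqq G\}$, so it suffices to bound each term $|H|\cdot\tp(G/H)$. Put $Q=G/H$ and $q=|Q|$. If $\tp(Q)=0$ the term vanishes, so assume $\tp(Q)>0$; then $\rho\le\diam^+(Q)$, and since for a group of type $(m_1\longc m_s)$ one has $\diam^+(Q)=\sum_i(m_i-1)\le\prod_i m_i-1=q-1$ (with equality precisely when $Q$ is cyclic, as $\prod m_i-1-\sum(m_i-1)>0$ once $s\ge 2$), it follows that $q\ge\rho+1$. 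Proposition~\ref{p:prop28} gives $\tp(Q)\le\lfl\frac{q-2}{\rho-1}\rfl+1\le\frac{q+\rho-3}{\rho-1}$, and a direct manipulation shows that $\frac{q+\rho-3}{\rho-1}\le\frac2{\rho+1}\,q$ is equivalent to $(\rho+1)(\rho-3)\le q(\rho-3)$, which holds because $\rho\ge4$ and $q\ge\rho+1$. Multiplying by $|H|$ and taking the maximum over $H$ yields $\sp(G)\le\frac2{\rho+1}|G|$.

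Next I would treat the ``equality holds if and only if'' clause. Equality forces a proper $H$ with $|H|\cdot\tp(G/H)=\frac2{\rho+1}|G|$, i.e.\ $\frac1q\lpr\lfl\frac{q-2}{\rho-1}\rfl+1\rpr=\frac2{\rho+1}$ for $Q=G/H$. Since $\rho\ge4$ the inequality $(\rho+1)(\rho-3)\le q(\rho-3)$ is tight only when $q=\rho+1$, and for $q=\rho+1$ one has $\frac{q-2}{\rho-1}=1\in\Z$, so the floor is exact and $\tp(Q)=2$. As $\tp(Q)>0$ forces $\diam^+(Q)\ge\rho=q-1$, the cyclicity criterion above shows $Q=G/H$ is cyclic of order $\rho+1$. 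Conversely, if $G$ has a subgroup $H$ with $G/H$ cyclic of order $\rho+1$, then $\tp(G/H)=\rk(G/H)+1=2$ by Theorem~\ref{t:thm24}(iv) (its absolute diameter equals $\rho$), whence $\sp(G)\ge|H|\cdot 2=\frac2{\rho+1}|G|$ and equality follows.

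Finally I would establish the structural equivalence. For (ii)$\Rightarrow$(i) it is a direct check: if $A=H\cup(g+H)$ then $\varphi(A)=\{0,g+H\}$ with $g+H$ generating $G/H\cong\Z_{\rho+1}$, so $\<A\>=G$, $\diam_A^+(G)=\diam_{\varphi(A)}^+(\Z_{\rho+1})=\rho$, and $|A|=2|H|=\frac2{\rho+1}|G|$. The implication (i)$\Rightarrow$(ii) is the main obstacle. I would first observe that any size-extremal $A$ must contain $0$, since otherwise $A\cup\{0\}$ is a strictly larger generating set of the same diameter; then set $H=\pi(A)$ and $\oA=\varphi(A)$, so that $A=\varphi^{-1}(\oA)$, the set $\oA$ is aperiodic and generates $G/H$ with $\diam_{\oA}^+(G/H)\ge\rho$, and $|A|=|H|\cdot|\oA|$. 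The key point is that $\oA$ must be $\rho$-maximal: were there some $\bar c\notin\oA$ with $\diam\ge\rho$ preserved, the lift $A\cup\varphi^{-1}(\bar c)$ would be a strictly larger generating set with diameter $\ge\rho$, contradicting $|A|=\sp(G)$. Hence $|\oA|\le\tp(G/H)$, and substituting this into the tight chain from the previous paragraph forces $|G/H|=\rho+1$, $G/H$ cyclic, and $|\oA|=2$. Writing $\oA=\{0,g+H\}$ with $g+H$ a generator of $G/H$ then gives $A=\varphi^{-1}(\oA)=H\cup(g+H)$, completing the proof.
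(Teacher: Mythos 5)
Your proof is correct, but it takes a genuinely different route from the paper's. The paper proves Theorem~\ref{t:thm29} by applying Kneser's Theorem directly to $A$: setting $H:=\pi(\<A\>_{\rho-1}^+)$, it uses $\<A\>_{\rho-1}^+=(\rho-1)(A_0+H)$ to get the single inequality $|G|\ge(\rho-1)|A_0+H|-(\rho-3)|H|$, and since $A_0+H$ meets at least two $H$-cosets this yields $|G|\ge\frac{\rho+1}{2}|A|$ at once; in the extremal case the same inequality forces $A=A_0+H$ to be exactly two cosets $H\cup(g+H)$, after which cyclicity of $G/H$ and $|G/H|=\rho+1$ follow by counting the cosets $kg+H$, $0\le k\le\rho-1$, inside $\<A\>_{\rho-1}^+$. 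You instead argue modularly: Lemma~\ref{l:lem23} reduces $\sp(G)$ to the quantities $|H|\cdot\tp(G/H)$, Proposition~\ref{p:prop28} (itself a Kneser consequence, so both proofs ultimately rest on the same tool) bounds each $\tp(Q)$, and the elementary fact that $\diam^+(Q)\le|Q|-1$ with equality precisely when $Q$ is cyclic (via Theorem~\ref{t:thm21}) supplies both the needed estimate $|Q|\ge\rho+1$ and, at the equality stage, the cyclicity of $G/H$; the converse uses Theorem~\ref{t:thm24}(iv). Your treatment of (i)$\Rightarrow$(ii) --- $0\in A$ by extremality, $H:=\pi(A)$ so that $A$ is a union of $H$-cosets with $|A|=|H|\,|\oA|$, and $\rho$-maximality of $\oA$ established by lifting a hypothetical enlargement back to $G$ --- is sound. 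What the paper's method buys is self-containedness and immediate reuse in Theorem~\ref{t:thm210}, where the same coset-counting dichotomy is run with three cosets instead of two; what yours buys is economy (it recycles results already proved) and a transparent isolation of where cyclicity comes from, namely tightness in $\diam^+(Q)\le|Q|-1$. Two small points of presentation rather than substance: you should note that $\pi(A)\neq G$ (immediate, since $\diam_A^+(G)\ge\rho>1$ rules out $A=G$), so the per-quotient bound indeed applies to $H=\pi(A)$; and in the equality analysis, the clause ``i.e.\ $\frac1q\lpr\lfl\frac{q-2}{\rho-1}\rfl+1\rpr=\frac2{\rho+1}$'' should rather say that the full chain $\tp(Q)\le\lfl\frac{q-2}{\rho-1}\rfl+1\le\frac{q+\rho-3}{\rho-1}\le\frac{2q}{\rho+1}$ collapses to equalities; the conclusion $q=\rho+1$ is unaffected.
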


Our final result goes somewhat beyond Theorem~\ref{t:thm29}, establishing the
structure of those generating sets $A$ with $\diam_A^+(G)\ge\rho$ and
$|A|>\frac32\,\rho^{-1}|G|$.

\begin{theorem} \label{t:thm210}
Let $G$ be a finite abelian group, and suppose that $\rho\ge 4$. If $A$ is a
generating set for $G$ such that $\diam_A^+(G)\ge\rho$ and
$|A|>\frac32\,\rho^{-1}|G|$, then there exist $H\le G$ and $g\in G$
satisfying the following conditions:
\begin{itemize}
\item[(i)]  $A\seq H\cup(g+H)$, and $|A|>\big(2-\frac{\rho-3}{2\rho}\big)|H|$;
\item[(ii)] $G/H$ is cyclic, its order satisfies
  $\rho+1\le |G/H|< \frac43\,\rho$, and $g+H$ generates $G/H$.
\end{itemize}
\end{theorem}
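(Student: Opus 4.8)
The plan is to pass to a quotient in which $A$ becomes, up to bounded size, a two-element aperiodic set, and to read off the whole structure from there. Concretely, I would set $H:=\pi\big(\<A\>_{\rho-1}^+\big)=\pi\big((\rho-1)A_0\big)$ and let $\phi\colon G\to\oG:=G/H$ be the canonical map, writing $\oA_0:=\phi(A_0)$. First I would record the transfer properties. Since $\phi$ is a homomorphism, $(\rho-1)\oA_0=\phi\big((\rho-1)A_0\big)$. Since $(\rho-1)A_0$ is a proper subset of $G$ whose period is exactly $H$, its image $(\rho-1)\oA_0$ is a proper subset of $\oG$ with trivial period, and $\oG\neq\{0\}$ (otherwise $(\rho-1)A_0$ would be $H$-periodic with $H=G$, forcing $(\rho-1)A_0=G$). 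Because each $H$-coset meets $A_0$ in at most $|H|$ points, $|A|\le|A_0|\le|H|\cdot|\oA_0|$, so the hypothesis $|A|>\tfrac32\rho^{-1}|G|$ passes to $|\oA_0|>\tfrac{3}{2\rho}\,|\oG|$.

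The heart of the argument is then a single application of Kneser's theorem (Section~\ref{s:Kneser}). Since $(\rho-1)\oA_0$ is aperiodic, Kneser gives $|(\rho-1)\oA_0|\ge(\rho-1)|\oA_0|-(\rho-2)$, while $(\rho-1)\oA_0\neq\oG$ gives $|(\rho-1)\oA_0|\le|\oG|-1$; hence $|\oA_0|\le\frac{|\oG|-2}{\rho-1}+1$, which is exactly the bound underlying Proposition~\ref{p:prop28}. Combining this with $|\oA_0|>\frac{3}{2\rho}|\oG|$ and clearing denominators collapses, using $\rho\ge4$, to $(\rho-3)|\oG|<2\rho(\rho-3)$, that is $|\oG|<2\rho$. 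The decisive observation is that $|\oG|\le 2\rho-1$ forces $\frac{|\oG|-2}{\rho-1}<2$, so the Kneser bound already yields $|\oA_0|\le2$; together with $\oG\neq\{0\}$ this pins down $|\oA_0|=2$. Note that no separate rank analysis of $G/H$ is needed: cyclicity will simply drop out of $|\oA_0|=2$.

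Finally I would translate back. Writing $\oA_0=\{0,\bar g\}$, the set $\oA_0$ generates $\oG$, so $\oG=\<\bar g\>$ is cyclic with $\bar g$ a generator; choosing any $g\in\phi^{-1}(\bar g)$ gives $A\seq A_0\seq H\cup(g+H)$, and $g+H$ generates $G/H$. The order estimates on $|\oG|=|G/H|$ then come for free: properness of $(\rho-1)\oA_0=\{0,\bar g,2\bar g\longc(\rho-1)\bar g\}$ forces $|\oG|\ge\rho+1$, while $2=|\oA_0|>\frac{3}{2\rho}|\oG|$ forces $|\oG|<\frac43\rho$, giving (ii); and substituting $|G|=|\oG|\cdot|H|\ge(\rho+1)|H|$ into $|A|>\frac{3}{2\rho}|G|$ gives $|A|>\frac{3(\rho+1)}{2\rho}|H|=\big(2-\frac{\rho-3}{2\rho}\big)|H|$, which is (i). The only step demanding genuine care is the period bookkeeping in the first paragraph: verifying that passing to $G/\pi\big((\rho-1)A_0\big)$ both makes the relevant sumset aperiodic—so that Kneser applies in its sharp form—and preserves the lower bound on $|\oA_0|$. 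Once that is secured, the inequalities are forced and everything else follows mechanically.
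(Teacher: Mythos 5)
Your proof is correct and takes essentially the same route as the paper: the same subgroup $H=\pi(\<A\>_{\rho-1}^+)$, a single application of Kneser's Theorem to the $(\rho-1)$-fold sumset, and the cardinality hypothesis to force $A$ into exactly two $H$-cosets, from which cyclicity, the bounds $\rho+1\le|G/H|<\frac43\rho$, and the estimate $|A|>\frac{3(\rho+1)}{2\rho}|H|=\big(2-\frac{\rho-3}{2\rho}\big)|H|$ follow exactly as in the paper's closing computation. The only difference is presentational: you work with the image $\oA_0$ in $G/H$ while the paper works with the saturated set $A_0+H$ in $G$; since $|A_0+H|=|H|\cdot|\oA_0|$, your inequalities are the paper's divided by $|H|$, and your step ``$|G/H|<2\rho$, hence $|\oA_0|\le 2$'' is the paper's exclusion of the case $|A_0+H|\ge 3|H|$ in disguise.
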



\subsection{Organization and List of Notation}

An important tool used in our arguments is Kneser's Theorem, stated in
Section~\ref{s:Kneser}. The proofs of our results, presented above, are
collected in Sections~\ref{s:abs_diam} to \ref{s:bounds_IV}. In the Appendix
we discuss an open problem, related to the quantity $\tp(G)$.

For convenience we include a list of standard notation, used throughout the
paper.
\begin{tabbing}
  {gcd(n 1, . . .,n r) } \= \kill
  \, $\N,\N_0$, and $\Z$ \> the sets of positive, non-negative, and
  all integers, respectively.
\end{tabbing}
For natural numbers $m,r,n_1, \ldots, n_r$:
\begin{tabbing}
  {gcd(n 1, . . .,n r) } \= \kill
  \, $\Z_m$  \> the group $\Z / m\Z$ of residues modulo $m$; \\
  \, $\gcd(n_1\longc n_r)$ \> the greatest common divisor of $n_1\longc
   n_r$.
\end{tabbing}
For real numbers $x,y$:
\begin{tabbing}
  {gcd(n 1, . . .,n r) } \= \kill
  \, $\lfloor x\rfloor$ \> the greatest integer not exceeding $x$; \\
  \, $[x,y]$ \> the set of all integers $n$ satisfying $x \le n
   \le y$.
\end{tabbing}
For a group $G$, an element $g\in G$, and a subset $A\subseteq G$:
\begin{tabbing}
  {gcd(n 1, . . .,n r) } \= \kill
  \, $\rk(G)$ \> the rank (i.e.\ the minimal number of generators) of $G$; \\
  \, $\ord(g)$ \> the order of $g$; \\
  \, $\langle A\rangle$ \> the subgroup of $G$, generated by $A$; \\
  \, $\pi(A)$ \> the period $\{ g \in G \colon A + g = A \}$ of $A$ in $G$.
\end{tabbing}
For a non-negative integer $r$ and subsets $A,A_1 \longc A_r$ of an abelian
group $G$:
\begin{tabbing}
  {gcd(n 1, . . .,n r) } \= \kill
  \, $A_1 \longp A_r$ \> the set $\{a_1 \longp a_r \colon a_1 \in A_1
  \longc a_r \in A_r \}$; \\
  \, $rA$ \> the set $A\longp A$ ($r$ summands); \\
  \, $r \ast A$ \> the set $\{ra \colon a \in A \}$.
\end{tabbing}
Also, for integers $m$ and $n$ we write $m \mid n$ to indicate that $m$
divides $n$.


\section{Kneser's Theorem} \label{s:Kneser}

We recall here a fundamental result due to Kneser, used in the proofs of
Proposition~\ref{p:prop28} and Theorems~\ref{t:thm29} and~\ref{t:thm210}.

\begin{theorem}[Kneser \cite{b:kn1,b:kn2}; see also \cite{b:mann}]
Let $A$ and $B$ be finite, non-empty subsets of an abelian group $G$ such
that
  $$ |A+B| \le |A|+|B|-1. $$
Then, letting $H := \pi(A+B)$, we have
  $$ |A+B| = |A+H|+|B+H|-|H|. $$
\end{theorem}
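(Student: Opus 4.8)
The statement combines a structural inequality with the hypothesis $|A+B|\le|A|+|B|-1$, and the plan is to separate these two roles. First I would reduce to the quotient modulo $H=\pi(A+B)$. Writing $\phi\colon G\to G/H$ for the canonical map and $\overline A=\phi(A)$, $\overline B=\phi(B)$, one has $\phi(A+B)=\overline A+\overline B$; since $A+B$ is a union of $H$-cosets while $A+H$ and $B+H$ are the $H$-saturations of $A$ and $B$, the three cardinalities satisfy $|A+B|=|H|\,|\overline A+\overline B|$, $|A+H|=|H|\,|\overline A|$, and $|B+H|=|H|\,|\overline B|$. A routine check shows that $\pi(\overline A+\overline B)=\{0\}$ in $G/H$; that is, the image sumset is aperiodic. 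Consequently the asserted identity $|A+B|=|A+H|+|B+H|-|H|$ is equivalent to the single scalar equation $|\overline A+\overline B|=|\overline A|+|\overline B|-1$ in $G/H$.

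This reduces everything to two facts about the aperiodic sumset $\overline A+\overline B$. The core inequality (Kneser's theorem in its unconditional form) is $|\overline A+\overline B|\ge|\overline A|+|\overline B|-1$; granting it, I would close the argument using the hypothesis as follows. Since $|A|\le|A+H|$ and $|B|\le|B+H|$, the hypothesis gives $|H|\,|\overline A+\overline B|=|A+B|\le|A|+|B|-1\le|H|(|\overline A|+|\overline B|)-1$, whence $|\overline A+\overline B|<|\overline A|+|\overline B|$; as $|\overline A+\overline B|$ is an integer this forces $|\overline A+\overline B|\le|\overline A|+|\overline B|-1$. Together with the core inequality this yields the required equality $|\overline A+\overline B|=|\overline A|+|\overline B|-1$, and multiplying through by $|H|$ completes the proof.

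The substance is therefore the aperiodic inequality, which I would prove by induction on $|B|$ using the Dyson $e$-transform. For $e\in G$ set $A(e)=A\cup(B+e)$ and $B(e)=B\cap(A-e)$; a short computation gives $|A(e)|+|B(e)|=|A|+|B|$ and $A(e)+B(e)\seq A+B$. The base case $|B|=1$ is immediate, since then $|A+B|=|A|=|A|+|B|-1$. For $|B|\ge2$ one chooses $e$ so that $B(e)$ is a nonempty proper subset of $B$, applies the induction hypothesis to the pair $(A(e),B(e))$, and transports the resulting bound back through the inclusion $A(e)+B(e)\seq A+B$.

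The main obstacle is the bookkeeping of periods in the inductive step: even though $\overline A+\overline B$ is aperiodic, the transformed sumset $A(e)+B(e)$ may have a nontrivial stabilizer $K=\pi(A(e)+B(e))$, so the induction hypothesis delivers the weaker estimate $|A(e)+B(e)|\ge|A(e)+K|+|B(e)+K|-|K|$ rather than a clean Cauchy--Davenport bound. Reconciling this with the goal requires comparing $K$ against the trivial period of $A+B$ and handling the degenerate situation in which no admissible $e$ exists; in that case $B$ is constrained to a coset structure that forces a nontrivial period of $A+B$, contradicting aperiodicity. This period-comparison step is precisely the technical heart of Kneser's theorem, and it is where the argument must be carried out with the greatest care.
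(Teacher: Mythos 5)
First, a point of reference: the paper does not prove this statement at all. It is quoted as a classical theorem of Kneser, with the proof delegated to the cited sources \cite{b:kn1,b:kn2,b:mann}, and is then used as a black box in Sections~\ref{s:bounds_I} and~\ref{s:bounds_IV}. So your attempt has to stand on its own, measured against the classical proofs.

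Your reduction is correct as far as it goes: with $H=\pi(A+B)$, the counting identities $|A+B|=|H|\,|\overline A+\overline B|$, $|A+H|=|H|\,|\overline A|$, $|B+H|=|H|\,|\overline B|$ and the aperiodicity of $\overline A+\overline B$ in $G/H$ are all right, and your deduction that the hypothesis $|A+B|\le|A|+|B|-1$ forces $|\overline A+\overline B|\le|\overline A|+|\overline B|-1$ is sound. This correctly reduces the stated conditional equality to the unconditional inequality: every \emph{aperiodic} sumset satisfies $|\overline A+\overline B|\ge|\overline A|+|\overline B|-1$. The gap is that this inequality is not an auxiliary lemma --- it \emph{is} Kneser's theorem in full strength, and your proposal does not prove it. The Dyson-transform bookkeeping you set up is correct ($|A(e)|+|B(e)|=|A|+|B|$, $A(e)+B(e)\subseteq A+B$, base case $|B|=1$), and the degenerate case you mention (no admissible $e$, whence $A+\langle B-B\rangle=A$ and $\langle B-B\rangle$ is a nontrivial subgroup of $\pi(A+B)$) can indeed be handled. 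But the case you name and then set aside --- the transform succeeds yet $K:=\pi\bigl(A(e)+B(e)\bigr)$ is nontrivial, so the induction hypothesis yields only $|A+B|\ge|A|+|B|-|K|$ --- is not a detail to be ``carried out with care''; it is the entire content of the theorem. Every known proof spends essentially all of its effort exactly there, typically via a double induction (on $|A+B|$ and on $|A|+|B|$ or $|B|$) combined with a genuinely intricate analysis of how $A$ and $B$ meet the $K$-cosets; there is no short way to recover the $-1$ from the $-|K|$. As written, your argument establishes the (easy) equivalence between the two standard formulations of Kneser's theorem and then assumes the hard one. That is a genuine gap, not a deferred routine verification; if the intent is merely to use the theorem, the honest course is to cite it, as the paper does.
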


Since, in the above notation, we have $|A+H|\ge |A|$ and $|B+H|\ge |B|$,
Kneser's Theorem shows that $|A+B|\ge |A|+|B|-|H|$. A straightforward
induction yields

\begin{corollary}
Let $A_1\longc A_r$ be finite, non-empty subsets of an abelian group $G$,
and write $H:=\pi(A_1\longp A_r)$. Then
  $$ |A_1\longp A_r| \ge |A_1|\longp |A_r|-(r-1)|H|. $$
In particular, if $A_1 \longp A_r$ is aperiodic, then
  $$ |A_1\longp A_r|\ge|A_1|\longp|A_r|-(r-1). $$
\end{corollary}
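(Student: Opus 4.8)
The plan is to argue by induction on $r$, using as the engine the inequality $|A+B|\ge|A|+|B|-|\pi(A+B)|$ recorded in the text right after Kneser's Theorem. I would first note that this inequality holds for \emph{all} finite non-empty $A,B$ with $H:=\pi(A+B)$: if Kneser's hypothesis $|A+B|\le|A|+|B|-1$ fails, then $|A+B|\ge|A|+|B|\ge|A|+|B|-|H|$ anyway, since $|H|\ge1$. The base case $r=1$ is trivial, as then the bound reads $|A_1|\ge|A_1|$, and the case $r=2$ is exactly this inequality. So I would fix $r\ge3$ and assume the statement for every collection of fewer than $r$ sets.

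For the inductive step I would peel off the last summand. Writing $S':=A_1\longp A_{r-1}$ and $H':=\pi(S')$, the inductive hypothesis gives
$$
  |S'|\ge|A_1|\longp|A_{r-1}|-(r-2)|H'|.
$$
Setting $S:=A_1\longp A_r=S'+A_r$ and $H:=\pi(S)$, the $r=2$ inequality applied to the pair $S',A_r$ yields $|S|\ge|S'|+|A_r|-|H|$, and combining this with the displayed bound gives
$$
  |S|\ge|A_1|\longp|A_r|-(r-2)|H'|-|H|.
$$
To recover the claimed estimate it then suffices to have $(r-2)|H'|+|H|\le(r-1)|H|$, which (dividing by $r-2>0$) is just $|H'|\le|H|$.

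The comparison $|H'|\le|H|$ is the single point that requires an argument, and it is where the proof really turns; I would obtain it from the stronger containment $H'\seq H$. Indeed, if $g\in H'=\pi(S')$ then $S'+g=S'$, whence $(S'+A_r)+g=(S'+g)+A_r=S'+A_r$, so $g\in\pi(S'+A_r)=H$. Thus $H'$ is a subgroup of $H$, giving $|H'|\le|H|$ and completing the induction. The final ``in particular'' assertion is then immediate: if $A_1\longp A_r$ is aperiodic, then $H=\{0\}$, so $|H|=1$ and the general bound specializes to $|A_1\longp A_r|\ge|A_1|\longp|A_r|-(r-1)$.
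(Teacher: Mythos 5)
Your proof is correct and is precisely the ``straightforward induction'' that the paper invokes right after Kneser's Theorem: the unconditional two-set bound $|A+B|\ge|A|+|B|-|\pi(A+B)|$ (you rightly note it holds even when Kneser's hypothesis fails), iterated by peeling off the last summand. The one detail the paper leaves implicit --- that $\pi(A_1\longp A_{r-1})\seq\pi(A_1\longp A_r)$, so the period can only grow and the error terms can be collected as $(r-1)|H|$ --- is exactly the point you identify and verify correctly.
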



\section{The absolute diameter: proofs of Theorems~\ref{t:thm21}
  and~\ref{t:thm22}}\label{s:abs_diam}

\begin{proof}[Proof of Theorem~\ref{t:thm21}]
Suppose that $G \cong \Z_{m_1} \longop \Z_{m_r}$ where $1\neq m_1\longd
m_r$. If $A$ is a standard generating set for $G$, it is easily seen that
$\diam_A^+(G)=\sum_{i=1}^r (m_i -1)$. It remains to show that
 $\diam^+(G)\le \sum_{i=1}^r (m_i -1)$.

We argue by induction on $|G|$. For $|G|=1$ there is nothing to prove.
Next, consider the special case where $G$ is of prime exponent. Then
every generating set for $G$ necessarily contains a standard generating
set, namely a basis for $G$ regarded as a vector space, and the claim
follows.

Now return to the general case $|G|>1$. Let $A$ be a generating set for $G$,
and choose a prime $p$ dividing $m_1$. Clearly, $p\ast A=\{pa\colon a\in A\}$
is a generating set for the subgroup $p\ast G=\{pg\colon g\in G\}$ of $G$.
Write ${\oG}:=G/(p\ast G)$, so that $\oG$ is elementary abelian of exponent
$p$ and rank $r$, and let $\oA$ be the image of $A$ under the natural
homomorphism $G\to\oG$. Then the special case, considered above, and
induction applied to the group $p\ast G$ yield
\begin{align*}
  \diam_A^+(G) & \le \diam_{\bar A}^+({\oG}) + p \cdot
  \diam_{p \ast A}^+(p \ast G) \\
  & \le r(p-1) + p \sum_{i = 1}^r (m_i/p -1) \\
  & = \sum_{i =1}^r (m_i -1).
\end{align*}
\nopagebreak
\end{proof}

\begin{proof}[Proof of Theorem~\ref{t:thm22}]
Suppose that $G \cong \Z_{m_1} \longop \Z_{m_r}$ where $1 \neq m_1 \longd
m_r$, and let $A \subseteq G$. Without loss of generality we assume that $0
\in A$, that is $A=A_0$.

First we verify the last assertion of the theorem; in view of
Theorem~\ref{t:thm21}, this also yields the implication from (ii) to (i).
Suppose that $\{e_1 \longc e_r \}$ is a standard generating set for $G$ with
the property that $A=\{ 0, a_1 \longc a_r \}$, where for each $i \in [1,r]$,
either $a_i = e_i$, or $a_i = e_i + a_{\sigma(i)}$ with $\sigma(i) \in
[i+1,r]$. We show that for every $g \in G$ there exist $\lambda_1 \longc
\lambda_r \in \N_0$ with $g = \sum_{i = 1}^r \lambda_i a_i$, $l_A^+(g) =
\sum_{i = 1}^r \lambda_i$, and $\lambda_i \le m_i -1$ for all $i \in [1,r]$.
As $|G| = \prod_{i=1}^r m_i$, this also implies the uniqueness. Fix $g \in
G$, and choose $\mu_1 \longc \mu_r \in \N_0$ such that $g = \sum_{i=1}^r
\mu_i a_i$ and $l_A^+(g) = \sum_{i=1}^r \mu_i$. If the set $J := \{ j \in
[1,r] \colon \mu_j \ge m_j \}$ is empty, there is nothing further to prove.
Assume that $J\neq \varnothing$ and let $j := \min J$. As $\mu_j e_j = (\mu_j
- m_j) e_j$ and $l_A^+(g) = \sum_{i=1}^r \mu_i$, we must have $a_j = e_j +
a_{\sigma(j)}$ with $\sigma(j) \in [j+1,r]$. But then
$$ \mu_j a_j = (\mu_j - m_j) a_j + m_j a_j
                                   = (\mu_j - m_j) a_j + m_j a_{\sigma(j)}, $$
and replacing $\mu_i$ by
$$
\mu_i' :=
\begin{cases}
\mu_i & \text{if $i \not \in \{j,\sigma(j)\}$,} \\
\mu_j - m_j & \text{if $i = j$,} \\
\mu_{\sigma(j)} + m_j & \text{if $i = \sigma(j)$,}
\end{cases}
$$
we arrive at $g = \sum_{i=1}^r \mu_i' a_i$ with $l_A^+(g) = \sum_{i=1}^r
\mu_i'$ and $\mu_i' \ge 0$ for $i \in [1,r]$. Moreover, $\mu_i' = \mu_i$
for $i \in [1,j-1]$ and $\mu_j' < \mu_j$. Iteration of this process
finitely many times yields $\lambda_1 \longc \lambda_r \in \N_0$ such
that $g = \sum_{i = 1}^r \lambda_i a_i$, $l_A^+(g) = \sum_{i = 1}^r
\lambda_i$, and $\lambda_i \le m_i -1$ for all $i \in [1,r]$, as wanted.

We now assume (i), i.e.\ $\diam_A^+(G) = \diam^+(G)$, and derive (ii),
using induction on $|G|$. Clearly, there is nothing to prove if $|G|=1$,
and we assume that $|G|>1$.

Our first claim is that $A$ contains an element of order $m_r=\exp(G)$.
If $m_r$ is prime, then \emph{all} non-zero elements of $G$ are of order
$m_r$. Suppose now that $m_r$ is composite. Choose a prime $p$ dividing
$m_1$, and note that $m_r/p
> 1$. Write ${\oG} := G/(p \ast G)$ and let $\oA$ be the image of $A$ under
the natural homomorphism $G\to\oG$. Now revisit the proof of
Theorem~\ref{t:thm21}: our assumption $\diam_A^+(G) = \diam^+(G)$ implies
that
$$
\diam_{p \ast A}^+(p \ast G) = \sum_{i = 1}^r (m_i/p -1) = \diam^+(p \ast
G).
$$
Choose $s\in[1,r]$ so that $m_1\longe m_{s-1}=p<m_s$. Then the group
 $p\ast G$ is of type $(m_s/p\longc m_r/p)$ and using the induction
hypothesis, applied to this group and the generating set $p\ast A$, we
write $p\ast A=\{0,b_s\longc b_r\}$ so that $\ord(b_r)=m_r/p$ and every
element of $p\ast G$ has a (unique) representation as $\sum_{i=s}^r
\lambda_i b_i$, where the coefficients satisfy $0\le \lambda_i<m_i/p$ for
all $i\in[s,r]$. We choose $a_r\in A$ so that $b_r=pa_r$ and show that
$\ord(a_r)=m_r$. Suppose, for a contradiction, that $\ord(a_r)<m_r$. Then
$\ord(a_r)$ is a proper divisor of $m_r$ while, at the same time,
$\ord(a_r)$ is divisible by $\ord(pa_r)=m_r/p$. It follows that
$\ord(a_r)=m_r/p$ and $\<a_r\>=\<pa_r\>$; therefore every element of
$p\ast G$ has a (unique) representation as $\sum_{i=s}^{r-1}\lam_i
b_i+\lam_r a_r$ with $0\le\lam_i<m_i/p$ for all $i\in [s,r]$.
Consequently, conferring to the proof of Theorem~\ref{t:thm21}, we have
\begin{align*}
  \diam_A^+(G)
    &\le \diam_{\bar A}^+({\oG})
                      + p \sum_{i=s}^{r-1} (m_i/p -1) + (m_r/p -1) \\
    &=   r(p-1) + p \sum_{i=s}^r (m_i/p -1) - (p-1)(m_r/p -1) \\
    &<   \sum_{i=1}^r (m_i -1) \\
    &=   \diam^+(G),
\end{align*}
in contradiction to (i). Thus our claim is proved: $A$ contains an
element of order $m_r$.

The next step shows among other things that $|A|=r+1$. Choose $a \in A$
with $\ord(a)=m_r$. Recalling that in a finite abelian group every cyclic
subgroup of maximal possible order admits a complement, we find a
subgroup $H\le G$ such that $G=H\oplus\langle a\rangle$, and consequently
$H\cong\Z_{m_1}\longop\Z_{m_{r-1}}$. Let $\oA$ denote the image of $A$
under the projection of $G$ onto $H$ along $\<a\>$. From
  $$ \diam_{\bar A}^+(H)+(m_r-1) \ge \diam^+_A(G)
                                      = \diam^+(G) = \diam^+(H) + (m_r-1) $$
it follows that $\diam_{\bar A}^+(H)=\diam^+(H)$. Applying the induction
hypothesis to the group $H$, we find a standard generating set
$\{h_1\longc h_{r-1}\}$ for $H$ so that, writing $\oA=\{0,\oa_1\longc
\oa_{r-1}\}$, for each $i \in [1,r-1]$ we either have $\oa_i = h_i$, or
$\oa_i = h_i + \oa_{\sigma(i)}$ with $\sigma(i) \in [i+1,r]$. For every
$i\in[1,r-1]$ fix arbitrarily $a_i\in A$ with $a_i\in\oa_i+\<a\>$, and
let $a_r:=a$.

Since every element of $H$ can be written as $\sum_{i=1}^{r-1}\lam_i\oa_i$
with $\lam_i\in[0,m_i-1]$ for all $i\in[1,r-1]$, every element of $G$ has a
representation as $\sum_{i=1}^r \lam_i a_i$ with $\lam_i\in[0,m_i-1]$ for all
$i\in[1,r]$. This shows that any $g\in G$ with
 $g\neq\sum_{i=1}^r (m_i-1) a_i$ satisfies
$l_A^+(g)<\sum_{i=1}^r (m_i-1)=\diam^+(G)$, and hence
 $l_A^+\big(\sum_{i=1}^r (m_i-1) a_i\big)=\sum_{i=1}^r (m_i-1)$ by~(i).
Consequently, for any choice of
 $\lambda_i\in [0,m_i-1],\ i\in [1,r]$, we have
\begin{equation}\label{e:equ41}
  l_A^+\bigg( \sum_{i=1}^r \lambda_i a_i \bigg) = \sum_{i=1}^r \lambda_i.
\end{equation}
In particular, it follows that $A$ contains no elements, other than $0$
and $a_1\longc a_r$.

Finally, we are ready to complete the proof of (ii). Since $a_i\in
h_i+\<a_{i+1}\longc a_r\>$ for all $i\in[1,r-1]$, the set $\tilde
A:=\{a_2\longc a_r \}$ generates a proper subgroup $K\lneqq G$ isomorphic
to $\Z_{m_2}\longop \Z_{m_r}$, which is complemented by $\< h_1 \>$ in
$G$. By \eqref{e:equ41} and Theorem~\ref{t:thm21} we have
$\diam_{\tilde A}^+(K) = \diam^+(K)$. Therefore, applying the induction
hypothesis to the group $K$ and the generating set
 $\tilde A$, we find a standard generating set $\{e_2,\longc e_r\}$ for $K$
such that, renumbering the elements of $\tilde A$, if necessary, for each
$i\in[2,r]$ we have either $a_i=e_i$, or $a_i=e_i+a_{\sigma(i)}$ with
$\sigma(i)\in[i+1,r]$.

We consider two cases. If $\ord(a_1)=m_1$, we put $e_1:=a_1$; it is then
immediate that $\{e_1,\ldots,e_r\}$ is a standard generating set for $G$,
satisfying our requirements.

Now assume that $\ord(a_1)>m_1$, and write $a_1=\mu_1 h_1+\sum_{i=2}^r
\mu_i e_i$ with $\mu_i\in[0,m_i-1]$ for all $i\in[1,r]$. As $G/K \cong
\Z_{m_1}$, we have $m_1a_1\in K$, and, applying induction to the group
$K$, we obtain two expressions for $m_1 a_1$:
\begin{equation}\label{e:equ42}
  m_1a_1=\sum_{i=2}^r m_1\mu_i e_i \ \text{ and } \ m_1a_1 = \sum_{i=2}^r \lam_i
  a_i,
\end{equation}
where $\lam_i\in[0,m_i-1]$ for all $i\in[2,r]$ and $\sum_{i=2}^r
\lam_i=l^+_{\tilde A}(m_1a_1)$.

For each $i\in[2,r+1]$ let $K_i:=\<e_i\longc e_r\> = \<a_i\longc a_r\>$, and
notice that this provides a descending chain of subgroups with $K_2=K$. Since
$m_1 a_1 \in K \setminus \{0\}$, we find $j \in [2,r]$ such that $m_1a_1 \in
K_j\stm K_{j+1}$. From $m_1a_1\in K_j$ and \eqref{e:equ42} we derive that
\begin{equation}\label{e:equ43}
  m_1 \mu_i \equiv 0 \mmod{m_i} \ \text{ and } \ \lam_i=0
                                              \ \text{for all}\ i\in[2,j-1].
\end{equation}
As $m_1a_1\notin K_{j+1}$, this further implies
\begin{equation}\label{e:equ44}
 m_1\mu_j \not \equiv 0 \mmod{m_j}.
\end{equation}
Comparing \eqref{e:equ42} and \eqref{e:equ43} we obtain $m_1\mu_je_j
-\lam_ja_j\in K_{j+1}$, whence $(m_1\mu_j-\lam_j)e_j\in K_{j+1}$. This
shows that $m_1\mu_j \equiv \lam_j \mmod{m_j}$, and therefore
$\lam_j\in[0,m_j-1]$ is the least non-negative residue of $m_1 \mu_j$
modulo $m_j$. In view of $m_1 \mid m_j$ and \eqref{e:equ44} we conclude
that $\lam_j \ge m_1$. On the other hand, it is not difficult to derive
from \eqref{e:equ41} that the length function $l_A^+$ restricted to
$K$ agrees with $l_{\tilde A}^+$, and hence
 $$ m_1 \le \lam_j \le \sum_{i=j}^r \lam_i = l_{\tilde A}^+(m_1 a_1) = l^+_A(m_1a_1) \le m_1.$$
This implies $\lam_j = m_1$ and $\lam_i = 0$ for $i \in [j+1,r]$;
consequently we have $m_1a_1 = m_1a_j$. Putting $e_1:= a_1 - a_j$, we now
obtain a standard generating set $\{e_1,\ldots,e_r\}$ for $G$ which,
clearly, satisfies the requirements.
\end{proof}


\section{Proofs of Lemma~\ref{l:lem23}, Theorems~\ref{t:thm24}
and~\ref{t:thm25}, and Proposition~\ref{p:prop28}}\label{s:bounds_I}

\begin{proof}[Proof of Lemma~\ref{l:lem23}]
We have to show that
$$
  \sp(G) = \max \{ |H| \cdot \tp(G/H) \colon H \lneqq G \}.
$$
If $\rho>\diam^+(G)$, then $\sp(G)=0$ and $\tp(G/H)=0$ for all $H\lneqq
G$; assume now that $\rho\in [2,\diam^+(G)]$.

Suppose that $A$ is a $\rho$-maximal generating set for $G$, and write
$H:=\pi(A)\lneqq G$. Then the image $\oA$ of $A$ under the canonical
homomorphism $G\to G/H$ is $\rho$-maximal in $G/H$, non-zero, and
aperiodic.  We get $\tp(G/H)\ge |\oA|=|A|/|H|$, and if $A$ is chosen so
that $|A|=\sp(G)$, this yields
$$ \sp(G) \le |H|\cdot\tp(G/H). $$

Conversely, let $H\lneqq G$ and suppose that $\oA$ is an aperiodic
$\rho$-maximal generating set for $G/H$. Let $A$ denote the full
pre-image of $\oA$ in $G$ under the canonical homomorphism $G\to G/H$.
Then $A$ is a generating set for $G$ with $\diam_A^+(G)\ge\rho$. If $\oA$
is chosen so that $|\oA|=\tp(G/H)$, we get
$$
\sp(G) \ge |A| = |H| \cdot |\oA| = |H| \cdot \tp(G/H).
$$
\end{proof}

\begin{proof}[Proof of Theorem~\ref{t:thm24}]
(i) If $\diam^+(G) \ge 1$, then $G$ is not trivial and the only $1$-maximal
subset of $G$ is $G$ itself; so $\tp[1](G) = 0$ and $\sp[1](G)= |G|$.

\smallskip\noindent
(ii) Suppose that $\diam^+(G)\ge 2$, or equivalently $|G|>2$. Fix a non-zero
element $g\in G$ and set $A:=G\stm\{g\}$. Then $A$ is aperiodic, generating,
and $\<A\>_1^+=A_0=A \neq G$. Thus $\tp[2](G)=\sp[2](G)=|G|-1$.

\smallskip\noindent
(iii) Suppose that $\diam^+(G)\ge 3$, which by Theorem~\ref{t:thm21} is
equivalent to $G\cong\Z_4$ or $|G|\ge 5$. If $A\seq G$ satisfies $|A|>|G|/2$,
then $\<A\>_2^+\supseteq 2A=G$ by the box principle. Thus $\sp[3](G)\le \lfl
|G|/2 \rfl$, and to complete the proof it suffices to construct an aperiodic
generating subset $A \seq G$ of cardinality $|A|=\lfl |G|/2 \rfl$ such that
$\<A\>_2^+\neq G$.

Assume first that $G$ has even order. Fix a subgroup $H\le G$ with
$|G/H|=2$ and choose an element $g\in G\stm H$. Set
$A:=\{0\}\cup(g+H)\stm\{g\}$. Clearly, we have $\<A\>=G$ and $|A|=|G|/2$.
Moreover it is easily seen that $\<A\>_2^+=2A=G\stm\{g\}$. In particular,
$2A$ is aperiodic, and so is $A$.

Now assume that $G$ has odd order. Fix $g\in G\stm\{0\}$, and let
 $h\in G$ be the (unique) element satisfying $2h=g$. The set $G\stm\{h\}$
can be (uniquely) partitioned into $(|G|-1)/2$ pairs so that the two
elements of each pair add up to $g$. Form a subset $A\seq G$ by selecting
one element out of each pair, subject to the requirement that $0$ is
selected (and therefore $g$ is not). The construction ensures that
$|A|=(|G|-1)/2$ and $g\notin 2A=\<A\>_2^+$.  Note that $\<A\>=G$, because
proper subgroups of $G$ have cardinality at most $|G|/3<|A|$. Furthermore
$A$ is aperiodic in view of $\gcd(|A|,|G|)=1$.

\smallskip\noindent
(iv) Suppose that $\rho=\diam^+(G)\ge 2$ and write $r:=\rk(G)$.
Theorem~\ref{t:thm22} shows that $\sp(G)=r+1$ and, moreover, that, if $A$ is
a standard generating set, then $A_0$ is $\rho$-maximal. It remains to
observe that the set $A_0$ is aperiodic, as one can easily verify taking into
account that $|G|>2$.
\end{proof}

The following lemma prepares the proof of Proposition~\ref{p:prop28}
which, in turn, is used to prove Theorem~\ref{t:thm25}.

\begin{lemma}\label{lem51}
Let $A$ be a $\rho$-maximal subset of a finite abelian group $G$, where
$\rho\in[2,\diam^+(G)]$. Then $\pi(A)=\pi(\<A\>_\tau^+)$ for all
$\tau\in[1,\rho-1]$.
\end{lemma}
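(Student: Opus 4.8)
The plan is to prove the two inclusions $\pi(A)\seq\pi(\<A\>_\tau^+)$ and $\pi(\<A\>_\tau^+)\seq\pi(A)$ separately, after a preliminary normalization. Since $A$ is $\rho$-maximal and $\<A\>_{\rho-1}^+=(\rho-1)A_0$ is unchanged by adjoining $0$ to $A$, maximality forces $0\in A$; thus $A=A_0$ and $\<A\>_\tau^+=\tau A$ for every $\tau$.

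First I would record the routine fact that the periods of the sets $\<A\>_\tau^+$ ascend: if $g+\tau A=\tau A$, then $g+(\tau+1)A=g+\tau A+A=(\tau+1)A$, so $\pi(\<A\>_\tau^+)\seq\pi(\<A\>_{\tau+1}^+)$. As $\<A\>_1^+=A$, this produces the chain
$$ \pi(A)\seq\pi(\<A\>_2^+)\seq\dots\seq\pi(\<A\>_{\rho-1}^+). $$
In particular $\pi(A)\seq\pi(\<A\>_\tau^+)$ for all $\tau\in[1,\rho-1]$, so it remains only to prove the single reverse inclusion $\pi(\<A\>_{\rho-1}^+)\seq\pi(A)$. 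Once this is in hand, every group in the chain is squeezed between $\pi(A)$ and $\pi(\<A\>_{\rho-1}^+)$ and hence equals $\pi(A)$, which is exactly the assertion for all $\tau\in[1,\rho-1]$.

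Write $S:=\<A\>_{\rho-1}^+$ and $P:=\pi(S)$, noting $S\neq G$ by $\rho$-maximality. The crux --- and the step I expect to be the main obstacle --- is to show that every $p\in P$ satisfies $A+p=A$; the right idea is to play the translate $A+p$ off against the maximality of $A$. I would argue by contradiction: if $A+p\neq A$, then since translation preserves cardinality there is an $x\in(A+p)\stm A$, say $x=a+p$ with $a\in A$. Because $x\notin A$, the $\rho$-maximality of $A$ yields $\<A\cup\{x\}\>_{\rho-1}^+=G$, and the contradiction will come from computing this sumset directly. Grouping summands by the number $j$ of times $x$ is used,
$$ \<A\cup\{x\}\>_{\rho-1}^+=\bigcup_{j=0}^{\rho-1}\big(jx+(\rho-1-j)A\big), $$
and substituting $x=a+p$ gives $jx+(\rho-1-j)A=jp+\big(ja+(\rho-1-j)A\big)\seq jp+S$. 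Since $jp\in P=\pi(S)$, each summand lies in $jp+S=S$, so $\<A\cup\{x\}\>_{\rho-1}^+\seq S\neq G$ --- contradicting maximality. This forces $P\seq\pi(A)$, which together with the ascending chain completes the proof.
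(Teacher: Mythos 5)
Your proof is correct and follows essentially the same route as the paper's: normalize $0\in A$, observe that the periods of the sets $\tau A$ ascend in $\tau$, and then use $\rho$-maximality to absorb $H:=\pi(\<A\>_{\rho-1}^+)$ into the period of $A$. The only difference is one of packaging: the paper tests maximality against the enlarged set $A+H$ all at once, computing $\<A+H\>_{\rho-1}^+=\<A\>_{\rho-1}^++H=\<A\>_{\rho-1}^+\snqq G$ and concluding $A+H=A$, whereas you adjoin a single element $x\in(A+p)\stm A$ and reach a contradiction through the decomposition of $(\rho-1)(A\cup\{x\})$ --- the same absorption idea, executed element by element.
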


\begin{proof}
Write $H:=\pi(\<A\>_{\rho-1}^+)$. Since $A$ is $\rho$-maximal, we have
$A_0=A$; consequently, $\<A\>_\tau^+ = \tau A$ holds for every
$\tau\in[1,\rho-1]$, and hence
$\pi(A)\le\pi(\<A\>_2^+)\longle\pi(\<A\>_{\rho-1}^+)=H$. On the other
hand, since
  $$ \<A+H\>_{\rho-1}^+ = \<A\>_{\rho-1}^++H = \<A\>_{\rho-1}^+ \snqq G, $$
by maximality of $A$ we have $A+H=A$, whence $H\le\pi(A)$.
\end{proof}

\begin{proof}[Proof of Proposition~\ref{p:prop28}]
Let $A$ be an aperiodic $\rho$-maximal subset of $G$. By Lemma~\ref{lem51}
we have $\pi(\<A\>_\tau^+)=\{0\}$ for all $\tau\in[1,\rho-1]$. Now Kneser's
Theorem shows that
$$
  |G|-1 \ge |\<A\>_{\rho-1}^+| = |(\rho-1)A_0| \ge
                                            (\rho-1)|A|-(\rho-2),
$$
and hence $|A|\le (|G|-2)/(\rho-1)+1$. The assertion follows.
\end{proof}

\begin{proof}[Proof of Theorem~\ref{t:thm25}]
It suffices to show that
  $$ \tp(\Z_m) = \lfl \frac{m-2}{\rho-1} \rfl + 1 $$
for any given $m\in\N$ and $\rho \in [2,m-1]$; the formula for $\sp(\Z_m)$
then follows from Lemma~\ref{l:lem23}.

Set $k:=\lfloor (m-2)/(\rho-1)\rfloor$, so that $k\ge 1$ in view of $\rho\le
m-1$. By Proposition~\ref{p:prop28}, we have $\tp(\Z_m)\le k+1$. Therefore, it
suffices to exhibit a $\rho$-maximal aperiodic subset $A\seq\Z_m$ of
cardinality $|A|\ge k+1$. Put $B:=\{0,1\longc k\}\seq\Z_m$, and let $A$ be a
$\rho$-maximal subset of $\Z_m$, containing $B$. It is enough to show that
$A$ is aperiodic; in fact this will imply that $A=B$.

Writing $H:=\pi(A)$, we obtain
$$
\<B\>_{\rho-1}^+ + H \seq \<A\>_{\rho-1}^+ + H = \<A\>_{\rho-1}^+ \snqq \Z_m.
$$
Now $\<B\>_{\rho-1}^+=\{0,1\longc(\rho-1)k\}$ and in particular
$$
  |\<B\>_{\rho-1}^+| = (\rho-1)k+1
                                > (\rho-1)\, \frac{(m-2)}{2(\rho-1)}+1 = m/2.
$$
Thus $\<B\>_{\rho-1}^+ + H\snqq\Z_m$ implies that $H=\{0\}$, as claimed.
\end{proof}


\section{$4$-maximal sets: proof of Theorem~\ref{t:thm27}}\label{s:bounds_III}

We begin with an easy consequence of Proposition~\ref{p:prop28}.
\begin{lemma}\label{l:lem61}
  Let $G$ be a finite abelian group, and let $n\in\N$. If
  $n\mid\exp(G)$ and $n\le |G|/3$, then $\tp[n+1](G)=|G|/n$.
\end{lemma}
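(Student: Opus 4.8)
The plan is to separate the two inequalities: the bound $\tp[n+1](G)\le|G|/n$ is immediate from Proposition~\ref{p:prop28}, while the reverse inequality calls for an explicit aperiodic $(n+1)$-maximal set of the exact size $|G|/n$. For the upper bound, observe that $n\mid\exp(G)\mid|G|$, so $|G|/n$ is an integer; writing $|G|=nq$ with $q=|G|/n\ge 3$ (since $n\le|G|/3$) and applying Proposition~\ref{p:prop28} with $\rho=n+1$ gives $\tp[n+1](G)\le\lfl (|G|-2)/n\rfl+1$. Assuming $n\ge 2$ (the range in which the lemma is used) one has $0<2/n\le 1$, hence $\lfl q-2/n\rfl=q-1$, and the right-hand side collapses to $q=|G|/n$. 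This is the ``easy consequence'' part.

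For the lower bound I would reduce, via Lemma~\ref{lem51}, to a purely combinatorial construction. Since a $(n+1)$-maximal set $A$ satisfies $\pi(A)=\pi(\<A\>_n^+)$, such an $A$ is aperiodic precisely when $\<A\>_n^+$ is. It therefore suffices to produce $B\seq G$ with $0\in B$, $|B|=|G|/n$, $\<B\>_n^+\snqq G$, and with the \emph{covering property} that $\<B\>_n^++H=G$ for every nontrivial $H\le G$; equivalently, $G\stm\<B\>_n^+$ is nonempty yet contains no nonzero coset. Granting such a $B$, let $A\supseteq B$ be any $(n+1)$-maximal set. Then $\<A\>_n^+\supseteq\<B\>_n^+$ still meets every nonzero coset while $\<A\>_n^+\ne G$, so $\pi(\<A\>_n^+)=\{0\}$ and $A$ is aperiodic; Proposition~\ref{p:prop28} forces $|A|\le|G|/n$, and $A\supseteq B$ forces $|A|=|G|/n$. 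This also shows en passant that $\diam^+(G)\ge n+1$, so $\tp[n+1](G)$ is a maximum over a nonempty family.

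The construction of $B$ I would model on the cyclic case of Theorem~\ref{t:thm25}. Fix $a\in G$ of order $e:=\exp(G)$, put $q:=e/n$, and choose a complement $G=\<a\>\oplus K$. The prototype is the interval $I:=\{0,a\longc(q-1)a\}$, whose $n$-fold sumset $\<I\>_n^+=\{0,a\longc(e-n)a\}$ is an interval of length $e-n+1$ meeting every coset of every subgroup of $\<a\>$. To reach the full size $|G|/n=q\,|K|$ while keeping the covering property, I would spread $B$ over the cosets of $K$ by a ``staircase'', setting $B=\{\,ia+k\colon k\in K,\ s(k)\le i\le s(k)+q-1\,\}$ for a suitable shift $s\colon K\to\Z$, arranged so that sums of $n$ elements of $B$ realize every $K$-component and every residue in the $\<a\>$-direction outside a coset-free remainder. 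The degenerate case $e=n$ (that is, $q=1$, where $\exp(G)=n\le|G|/3$ forces $G$ noncyclic) I would treat separately by a minimal-generating construction, as in the model $G=\Z_3^2$, $n=3$, where $B=\{0,a,b\}$ with $\{a,b\}$ a basis gives $\<B\>_3^+=G\stm\{2a+2b\}$.

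The main obstacle is verifying the covering property $\<B\>_n^++H=G$ for \emph{all} nontrivial $H$ --- equivalently, that $B$ generates every proper quotient $G/H$ in at most $n$ steps although it needs $n+1$ steps in $G$ itself. Unlike the cyclic situation, no cardinality threshold on $\<B\>_n^+$ can force this, since a subgroup of large index may leave a coset uncovered even when $|\<B\>_n^+|$ is close to $|G|$. The verification must exploit the staircase structure directly, and I expect the cleanest route is an induction on $\rk(G)$: split off a cyclic direct summand, invoke the covering property in the quotient, and track the $K$-component of the $n$-fold sums to recover it in $G$. Preserving the hypotheses $n\mid\exp$ and $n\le|\cdot|/3$ along this induction, and choosing the shift $s$ so that $G\stm\<B\>_n^+$ remains coset-free, is where the real effort lies.
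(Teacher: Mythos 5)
Your upper bound via Proposition~\ref{p:prop28} is computed correctly, and your reduction of aperiodicity to a ``covering property'' of $\<B\>_n^+$ via Lemma~\ref{lem51} is sound --- this much parallels the paper. The genuine gap is that the lower bound, which is the entire content of the lemma, is never established: you do not actually produce a set $B$ of size $|G|/n$ with the covering property, you only sketch a ``staircase'' and state yourself that verifying the covering property ``is where the real effort lies''. That verification is not routine, and your specific proposal fails without further input: for $G=\Z_4\oplus\Z_2$ and $n=2$ (a legitimate instance of the lemma), taking $a$ of order $4$, $K$ the complementary $\Z_2$, and the constant-shift staircase $B=\{0,a\}+K$, one computes $\<B\>_2^+=G\stm(3a+K)$, so the complement is a full $K$-coset and $\<B\>_2^+$ is $K$-periodic; whether shifts $s$ can always be chosen to repair this is exactly the unproved point. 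Your fallback for the degenerate case $\exp(G)=n$ is also quantitatively wrong in general: for $G=\Z_3^3$, $n=3$, the target cardinality is $|G|/n=9$, while $\{0\}\cup\{\text{basis}\}$ has only $4$ elements, so your model $\Z_3^2$ works only by a coincidence of sizes.

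The paper's construction is far simpler and dissolves what you call the main obstacle. Since $n\mid\exp(G)$, there is a surjective homomorphism $G\to\Z_n$; let $H$ be its kernel, so that $|H|=|G|/n\ge 3$ by the hypothesis $n\le|G|/3$, and pick $g\in G$ such that $g+H$ generates $G/H\cong\Z_n$. Put $A:=\{0\}\cup\big((g+H)\stm\{g\}\big)$. Then $|A|=|H|=|G|/n$, the set $A$ generates $G$ (this is where $|H|\ge 3$ is used), and a short computation gives $\<A\>_n^+=G\stm\{g\}$, with $A$ maximal subject to $\<A\>_n^+\neq G$. Because the complement of $\<A\>_n^+$ is a \emph{single point}, it cannot contain a coset of a nonzero subgroup, so your covering property holds trivially and Lemma~\ref{lem51} yields aperiodicity at once. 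The insight missing from your proposal is that one should not try to cover all cosets by a delicate staircase spread over a complement of a maximal cyclic subgroup; instead one arranges for $G\stm\<A\>_n^+$ to be a singleton, which the quotient-by-$H$ construction achieves for free.
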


\begin{proof}
By Proposition~\ref{p:prop28} it suffices to construct an aperiodic
$(n+1)$-maximal generating set $A$ for $G$ of cardinality $|A|=|G|/n$.

Clearly, there is a surjective homomorphism from $G$ onto the group $\Z_n$.
Let $H$ be the kernel of this homomorphism, so that $H$ is a proper subgroup
of $G$ with $|H|\ge 3$. Choose $g\in G$ such that $g+H$ generates $G/H$, and
put $A:=\{0\}\cup (g+H)\stm\{g\}$. Evidently, we have $|A|=|H|=|G|/n$, and
$|H|\ge 3$ implies that $\<A\>=G$. Furthermore, $\<A\>_n^+=G\stm\{g\}$ and
$A$ is maximal with this property. Finally, Lemma~\ref{lem51} shows that $A$
is aperiodic.
\end{proof}

Now let $G$ be a finite abelian group with $\diam^+(G) \ge 4$.
Proposition~\ref{p:prop28} and Lemma~\ref{l:lem23} show that
\begin{align}
  \tp[4](G) & \le \lfl \frac{|G| +1}{3} \rfl, \label{e:equ61} \\
  \sp[4](G) & \le \max \{ (|G|/d) \cdot \lfl (d+1)/3 \rfl \colon d
  \text{ divides } |G| \} \notag.
\end{align}
Our guiding idea is that in many situations these inequalities are
sharp.

\begin{proof}[Proof of Theorem~\ref{t:thm27}]
(i) If $3$ divides $|G|$, then Lemma~\ref{l:lem61} implies that
$\tp[4](G)=|G|/3$, unless $|G|=6$ in which case $G$ is cyclic and
$\tp[4](G)=2$ by Theorem~\ref{t:thm25}. Assume now that every divisor of $|G|$
is congruent to $1$ modulo $3$.

We argue by induction on $\rk(G)$. If $\rk(G)=1$, then $G$ is cyclic and
Theorem~\ref{t:thm25} yields $\tp[4](G)=(|G|-1)/3$.  Now suppose that
$\rk(G)\ge 2$. We write $G=G_1\oplus G_2$, where $G_1$ is cyclic of order
$m\equiv 1\mmod 3$ and $\rk(G_2)=\rk(G)-1$. To simplify the notation we
assume that $G_1=\Z_m$. Then $A_1:=\{0,1\longc (m-4)/3\}$ is an aperiodic
$4$-maximal subset of $G_1$ with $|A_1|=(|G_1|-1)/3$. By induction we find an
aperiodic $4$-maximal subset $A_2\seq G_2$ with $|A_2|=(|G_2|-1)/3$. We
define
$$
A := (A_1 + G_2) \cup (\{(m-1)/3\} + A_2) \seq G.
$$
Clearly, we have $|A|=|A_1||G_2|+|A_2|=(|G|-1)/3$ and hence $A$ is aperiodic.
Moreover, it is easily seen that
$G\stm\<A\>_3^+=\{m-1\}+(G_2\stm\<A_2\>_3^+)$ and $A$ is $4$-maximal. Thus
$\tp[4](G) \ge |A| = (|G| -1)/3$; the reverse inequality holds by
\eqref{e:equ61}.

\smallskip\noindent

(ii) Recall that by the assumption $G$ is not of exponent $2$, and that
$$
  \eta(G) := \max \{ |H| \colon H \le G \text{ with } \exp(G/H)
    \neq 2 \text{ and } |G/H| \equiv 2 \mmod{3} \}.
$$
Proposition~\ref{p:prop28} and the results \eqref{e:equ21} of Davydov-Tombak
show that for every $H \lneqq G$ we have
$$
|H| \cdot \tp[4](G/H) \leq
\begin{cases}
  \frac13\,|G|          & \text{if $|G/H| \equiv 0 \mmod{3}$,} \\
  \frac13\,(|G| - |H|)  & \text{if $|G/H| \equiv 1 \mmod{3}$,} \\
  \frac13\,(|G| + |H|)  & \text{if $|G/H| \equiv 2 \mmod{3}$ and $\exp(G/H) \neq 2$,} \\
  \frac14\,(|G| + 4|H|) & \text{if $|G/H| \equiv 2 \mmod{3}$, $\exp(G/H) = 2$,} \\
                        & \text{\hspace{1.65in} and $\rk(G/H) \geq 4$,} \\
  0                     & \text{if $G/H$ is isomorphic to a subgroup of $\Z_2^3$.}
\end{cases}
$$
Observe that in the fourth case, i.e.\ when $G/H \cong \Z_2^r$ with $r \ge
4$, we have $(|G| + 4|H|)/4 \le (|G| - |H|)/3$. Thus Lemma~\ref{l:lem23} yields
$$
\sp[4](G) \le
\begin{cases}
  \frac{1}{3} (|G| + \eta(G)) & \text{if $\eta(G) \neq 0$,} \\
  \frac{1}{3} |G| & \text{if $\eta(G) = 0$ and $3$ divides $|G|$,} \\
  \frac{1}{3} (|G| -1) & \text{if $\eta(G) = 0$ and $3$ does not
    divide $|G|$,}
\end{cases}
$$
and it remains to establish the reverse inequalities.

Assuming first that $\eta(G)\neq 0$, we find $H\le G$ such that
 $\exp(G/H)\neq 2,\ |G/H|\equiv 2\mmod{3}$, and $|H|=\eta(G)$. Considering the
prime factorization of $|G/H|$, we see that there are three possibilities:
\begin{itemize}
\item[(a)] $|G/H|=p$, where $p\equiv 2\mmod{3}$ is an odd prime, and then
  $G/H\cong\Z_p$;
\item[(b)] $|G/H|=2q$, where $q\equiv 1\mmod{3}$ is a prime, and then
  $G/H\cong\Z_{2q}$;
\item[(c)] $|G/H|=8$, and then $G/H\cong\Z_2\oplus\Z_4$ or $G/H\cong\Z_8$.
\end{itemize}
In cases (a) and (b) the group $G/H$ is cyclic of order $|G|/\eta(G)$, so
Lemma~\ref{l:lem23} and Theorem~\ref{t:thm25} show that
$$
  \sp[4](G) \ge |H| \cdot \tp[4](G/H) = \eta(G) \cdot (|G|/\eta(G) + 1)/3
              = \tfrac{1}{3} (|G| + \eta(G)).
$$
In case (c) the claim follows similarly, noting that
$\tp[4](\Z_2\oplus\Z_4)=\tp[4](\Z_8)=3$, by Theorems~\ref{t:thm24} and
\ref{t:thm25} respectively.

Next, if $\eta(G) = 0$ and $3$ divides $|G|$, then by~(i) we have
$\sp[4](G)\ge\tp[4](G)=|G|/3$.

Finally, suppose that $\eta(G) = 0$ and that $3$ does not divide
$|G|$. Then, clearly, $|G|$ has no odd prime divisors congruent to $2$
modulo $3$, and $|G|$ is not a power of $2$. (If $G$ were a $2$-group,
then, because $\diam^+(G) \ge 4$ and $G$ is not of exponent $2$, we
could map $G$ onto $\Z_8$ or $\Z_4 \oplus \Z_2$ in contradiction to
$\eta(G) = 0$.) Consequently, $|G|$ has at least one prime divisor
congruent to $1$ modulo $3$, and so $2$ does not divide $|G|$ at all.
Thus every (prime) divisor of $|G|$ is congruent to $1$ modulo $3$,
and the claim follows again from~(i).
\end{proof}


\section{General Estimates for $\tp(G)$ and $\sp(G)$: \\ Proofs of
  Theorems~\ref{t:thm29} and~\ref{t:thm210}}\label{s:bounds_IV}

\begin{proof}[Proof of Theorem~\ref{t:thm29}]
  It suffices to show that $\sp(G)\le\frac2{\rho+1}\,|G|$ and that (i)
  implies (ii). Indeed, (ii) trivially implies (i), and it is easily
  seen that, if (i) and (ii) are equivalent, then equality holds in
  $\sp(G)\le\frac2{\rho+1}\,|G|$ if and only if $G$ has a subgroup $H$
  such that $G/H$ is cyclic of order $\rho+1$.

Let $A$ be a generating set for $G$ with $\diam_A^+(G)\ge\rho$.  We write
$H:=\pi(\<A\>_{\rho-1}^+)$ and note that $|G|\ge|\<A\>_{\rho-1}^+|+|H|$.
Since
  $$ \<A\>_{\rho-1}^+ = (\rho-1)A_0+H = (\rho-1)(A_0+H), $$
Kneser's Theorem yields
  $$ |\<A\>_{\rho-1}^+| \ge (\rho-1)|A_0+H| - (\rho-2)|H|. $$
Combining these observations, we obtain
\begin{equation}\label{e:equ71}
   |G| \ge (\rho-1)|A_0+H| - (\rho-3)|H|.
\end{equation}

Since $0\in A_0\nsubseteq H$, the set $A_0+H$ is the union of at least two
$H$-cosets, and so $|A_0+H|\ge 2|H|$. From \eqref{e:equ71} we obtain
\begin{equation}\label{e:equ72}
  |G| \ge \Big(\rho-1-\frac12\, (\rho-3)\Big) |A_0+H| \ge
   \frac{\rho+1}2\,|A|.
\end{equation}
This gives the upper bound $\sp(G)\le\frac2{\rho+1}\,|G|$; it remains to show
that (i) implies (ii).

To this end, in addition to our earlier assumption that $A$ is a generating
set for $G$ with $\diam_A^+(G)\ge\rho$, we assume that
$|A|=\frac2{\rho+1}\,|G|$. Then \eqref{e:equ72} yields $A=A_0+H$, and this
set is the union of exactly two $H$-cosets. Consequently, $A=H\cup(g+H)$ with
some $g\in G\stm H$. Since $A$ generates $G$, the quotient group $G/H$ is
cyclic and generated by $g+H$. Since
  $$ \textstyle \bigcup\, \{ kg+H \colon 0\le k\le \rho-1 \}
                                             = \<A\>_{\rho-1}^+\snqq G, $$
the order of $G/H$ is at least $\rho+1$. Finally, in view of
$|A|=\frac2{\rho+1}\,|G|$, the estimate
  $$ |A| = 2|H| = 2\,\frac{|G|}{|G/H|} \le 2\,\frac{|G|}{\rho+1} = |A| $$
yields $|G/H|=\rho+1$, as desired.
\end{proof}

\begin{proof}[Proof of Theorem~\ref{t:thm210}]
Let $A\seq G$ be a generating set such that $\diam_A^+(G)\ge\rho$ and
$|A|>\frac32\,\rho^{-1}|G|$. We set $H:=\pi(\<A\>_{\rho-1}^+)$ and show that
there exists $g\in G$ such that
\begin{itemize}
\item[(i)] $A\seq H\cup(g+H)$, and $|A|>\big(3-\frac{\rho-3}{3\rho-1}\big) |H|$;
\item[(ii)] $G/H$ is cyclic of order $\rho+1\le |G/H|\le \frac43\,\rho$, and
  $g+H$ generates $G/H$.
\end{itemize}

As in the proof of Theorem~\ref{t:thm29}, we see that $A_0+H$ consists of
two $H$-cosets: the possibility $|A_0+H|\ge 3|H|$ is ruled out, since
otherwise
  $$ |G| \ge \Big( \rho-1-\frac13\,(\rho-3) \Big) |A_0+H|
                                                 \ge \frac 23\, \rho |A|, $$
contrary to the assumptions. Exactly as before, the quotient group $G/H$ is
cyclic of order at least $\rho+1$, and there exists $g\in G$ such that $g+H$
is a generator of $G/H$ and $A_0+H=H\cup (g+H)$.

Finally, from
  $$ 2|H| = |A_0+H| \ge |A| > \frac32\,\rho^{-1}|H||G/H| $$
it follows that $|G/H|<\frac43\,\rho$ and
  $$ |A| > \frac{3(\rho+1)}{2\rho}\,|H|
                                = \left(2-\frac{\rho-3}{2\rho}\right) |H|, $$
as required.
\end{proof}

In fact one can go further and, given $\eps>0$, describe in a similar way the
structure of those generating sets $A$ for $G$ which satisfy
$\diam_A^+(G)\ge\rho$ and $|A|>(1+\eps)|G|/(\rho-1)$. Indeed, in this case
$A_0+H$ consists of $O(\eps^{-1})$ $H$-cosets. However, for small $\eps$ the
description is likely to become quite complicated.


\appendix

\section{An Example: $\tp(G)$ May Vanish}

As can be seen directly from the definition, for every abelian group $G$ and
every $\rho\in[1,\diam^+(G)]$ we have $\sp(G)>0$. Theorem~\ref{t:thm24}, on
the other hand, contains the observation that $\tp[1](G)=0$ for every finite
abelian group $G$. In this appendix we construct examples, illustrating that
$\tp(G)$ may vanish for large values of $\rho$ for less trivial reasons.

\begin{example}
For $n\in \N$ with $n \ge 3$, let $G = \Z_2 \oplus \Z_{2^n}$. Then
$\diam^+(G) = 2^n$, and every $(2^n -2)$-maximal subset of $G$ is periodic.
\end{example}

\begin{proof}[Explanation]
  Let $G = \Z_2 \oplus \Z_{2^n}$ with $n \ge 3$.
  Theorem~\ref{t:thm21} shows that $\diam^+(G) = 2^n$. Let $A \seq G$ be
  maximal subject to $\<A\>_{2^n -2}^+ \neq \<A\> = G$; we have to
  show that $A$ is periodic.

  By maximality, we have $A = A_0$. Since $A$ generates $G$, there
  exists an element $a_1 \in A$ of order $\ord(a_1) = 2^n$, and
  furthermore there exists an element $a_2 \in A$ such that $a_2
  \notin \<a_1\>$. Without loss of generality, we may assume that $a_1
  = (0,1)$ and $a_2 = (1,\alp)$ with $0 \le \alp < 2^n$.

  Notice that, whenever $(1,\bet) \in A$ (this holds, for instance, for
  $\bet = \alp$), then
  \begin{equation}\label{e:equA1}
    G \stm \<A\>_{2^n -2}^+ \seq \{(0,2^n -1), (1,\bet + 2^n -2),
    (1,\beta +2^n-1)\},
  \end{equation}
  and because $\<A\>_{2^n -2}^+ \neq G$, at least one of $(0,2^n -1)$,
  $(1,\beta + 2^n- 2)$, $(1,\beta + 2^n -1)$ has length greater than
  $2^n -2$ with respect to $A$.

  \noindent
  \emph{Assertion. If $b = (1,\bet)\in A$, then
    $b\in\{(1,0),(1,1),(1,2^{n-1}),(1,2^{n-1}+1)\}$.} \\
  For a contradiction, suppose that $b = (1,\bet) \in
  A\stm\{(1,0),(1,1),(1,2^{n-1}),(1,2^{n-1}+1)\}$. Then we have $2b =
  (0,2 \bet) \notin \{(0,0),(0,1),(0,2),(0,3),(0,2^n -1)\}$, and thus
  \begin{align*}
    & (0,2^n -1) - 2b = (0,2^n -1 -2\bet) \in \<a_1\>_{2^n-5}^+, & &
    \text{so $l_A(0,2^n -1) \le 2^n -3$;}\\
    &(1,\beta + 2^n - 2) - 3b = (0,2^n -2- 2\bet) \in
    \<a_1\>_{2^n-6}^+, & & \text{so $l_A(1,\beta + 2^n -2) \le 2^n - 3$;}\\
    &(1,\beta + 2^n - 1) - 3b = (0,2^n -1 -2\bet) \in
    \<a_1\>_{2^n-5}^+, & & \text{so $l_A(1,\beta + 2^n -1) \le 2^n -
    2$.}
  \end{align*}
  This contradicts the observation following \eqref{e:equA1}.

  \noindent
  \emph{Assertion. If $c=(0,\gamma)\in A$, then $c\in\{(0,0),(0,1)\}$.} \\
  Let $c = (0,\gamma) \in G \stm \{(0,0),(0,1)\}$. Then it is easily
  seen that $\< a_1, c \>_{2^n-3}^+ = \{ (0,\delta) \colon \delta \in
  \Z_{2^n} \}$, and hence $\< a_1, a_2, c \>_{2^n -2}^+ = G$. It
  follows that $c \not \in A$, as required.

  The two assertions above yield
  $$
  \{ (0,0),(0,1), (1,\alp) \} \seq A \seq \{ (0,0), (0,1), (1,0),
  (1,1), (1,2^{n-1}), (1,2^{n-1}+1) \}.
  $$

  \noindent
  \emph{Case 1: $(1,\alp) = (1,0)$.} Because of \eqref{e:equA1} we
  certainly have $(1,2^{n-1}), (1,2^{n-1} +1) \notin A$. On the other
  hand, $(0,2^n -1) \notin \< (0,1),(1,0),(1,1) \>_{2^n - 2}^+$. So $A
  = \{ (0,0), (0,1), (1,0), (1,1) \}$ has non-trivial period $\{
  (0,0), (1,0) \}$.

  \noindent
  \emph{Case 2: $(1,\alp) = (1,1)$.} Again \eqref{e:equA1} shows that
  $(1,2^{n-1}), (1,2^{n-1} +1) \notin A$. So as in in the first case
  $A = \{ (0,0), (0,1), (1,0), (1,1) \}$ has non-trivial period $\{
  (0,0), (1,0) \}$.

  \noindent
  \emph{Case 3: $(1,\alp) \in \{ (1,2^{n-1}), (1,2^{n-1}+1) \}$.}
  Write the elements of $G$ with respect to the generating pair
  $((1,2^{n-1}),(0,1))$ rather than $((1,0),(0,1))$. In these new
  coordinates, $a_1$ is still represented by $(0,1)$, but $a_2$ is
  represented by $(1,0)$ or $(1,1)$. We are reduced to either Case~1
  or Case~2.
\end{proof}

Perhaps it is feasible to classify all pairs $(G,\rho)$ such that $\tp(G)=0$,
but beyond the above example nothing is known.


\end{document}